\documentclass{amsart}

\usepackage{amsmath}
\usepackage{amsthm}
\usepackage{amsfonts}
\usepackage{amssymb}

\newcommand\R{{\mathbb{R}}}
\newcommand\C{{\mathbb{C}}}

\renewcommand\P{{\mathbf{P}}}
\newcommand\E{{\mathbf{E}}}

\newcommand\N{{\mathbf{N}}}
\newcommand\Var{\mathbf{Var}}
\renewcommand\Im{{\operatorname{Im}}}
\renewcommand\Re{{\operatorname{Re}}}
\newcommand\eps{{\varepsilon}}

\newcommand\tr{\operatorname{trace}}

\newcommand\op{\operatorname{op}}

\newcommand\Dyson{{\operatorname{Sine}}}
\renewcommand\th{{\operatorname{th}}}
\renewcommand\sc{{\operatorname{sc}}}

\newcommand\condone{{{\bf C1}}}


%








\subjclass{15A52}


\parindent = 5 pt
\parskip = 12 pt

\theoremstyle{plain}
  \newtheorem{theorem}{Theorem}

  \newtheorem{proposition}[theorem]{Proposition}
  
  \newtheorem{lemma}[theorem]{Lemma}
  \newtheorem{corollary}[theorem]{Corollary}

\theoremstyle{definition}
  \newtheorem{definition}[theorem]{Definition}

\include{psfig}

\begin{document}

\title[Individual eigenvalue gap]{The asymptotic distribution of a single eigenvalue gap of a Wigner matrix}

\author{Terence Tao}
\address{Department of Mathematics, UCLA, Los Angeles CA 90095-1555}
\email{tao@math.ucla.edu}
\thanks{T. Tao is supported by NSF grant DMS-0649473.}

\begin{abstract}  We show that the distribution of (a suitable rescaling of) a single eigenvalue gap $\lambda_{i+1}(M_n)-\lambda_i(M_n)$ of a random Wigner matrix ensemble in the bulk is asymptotically given by the Gaudin-Mehta distribution, if the Wigner ensemble obeys a finite moment condition and matches moments with the GUE ensemble to fourth order.  This is new even in the GUE case, as prior results establishing the Gaudin-Mehta law required either an averaging in the eigenvalue index parameter $i$, or fixing the energy level $u$ instead of the eigenvalue index.

The extension from the GUE case to the Wigner case is a routine application of the Four Moment Theorem.  The main difficulty is to establish the approximate independence of the eigenvalue counting function $N_{(-\infty,x)}(\tilde M_n)$ (where $\tilde M_n$ is a suitably rescaled version of $M_n$) with the event that there is no spectrum in an interval $[x,x+s]$, in the case of a GUE matrix.  This will be done through some general considerations regarding determinantal processes given by a projection kernel.
\end{abstract}

\maketitle

\setcounter{tocdepth}{1}

\section{Introduction}

Given an $n \times n$ Hermitian matrix $M_n$, we let
$$ \lambda_1(M_n) \leq \ldots \leq \lambda_n(M_n)$$
be the $n$ eigenvalues of $M_n$ in non-decreasing order, counting multiplicity.  The purpose of this paper is to study the eigenvalue gaps $\lambda_{i+1}(M_n) - \lambda_i(M_n)$ of such matrices when $M_n$ is drawn from the Gaussian Unitary Ensemble (GUE), or more generally from a Wigner random matrix ensemble, in the asymptotic limit $n \to \infty$ and for a single $i = i(n)$ in the bulk region $\eps n \leq i \leq (1-\eps) n$.  

To begin with, let us set out our notational conventions for GUE and Wigner ensembles:

\begin{definition}[Wigner and GUE]\label{def:Wignermatrix}  Let $n \geq 1$ be an integer (which we view as a parameter going off to infinity). An $n \times n$ \emph{Wigner Hermitian matrix} $M_n$ is defined to be a  random Hermitian $n \times n$ matrix $M_n = (\xi_{ij})_{1 \leq i,j \leq n}$, in which the $\xi_{ij}$ for $1 \leq i \leq j \leq n$ are jointly independent with $\xi_{ji} = \overline{\xi_{ij}}$ (in particular, the $\xi_{ii}$ are real-valued), and each $\xi_{ij}$ has mean zero and variance one.  We say that the Wigner matrix ensemble \emph{obeys condition {\condone} with constant $C_0$} if one has
$$ \sup_{i,j} \E |\xi_{ij}|^{C_0} \leq C$$
for some constant $C$ (independent of $n$).

A \emph{GUE matrix} $M_n$ is a Wigner Hermitian matrix in which $\xi_{ij}$ is drawn from the complex gaussian distribution $N(0,1)_\C$ (thus the real and imaginary parts are independent copies of $N(0,1/2)_\R$) for $i \neq j$, and $\xi_{ii}$ is drawn from the real gaussian distribution $N(0,1)_\R$.

A Wigner matrix $M_n = (\xi_{ij})_{1 \leq i,j \leq n}$ is said to \emph{match moments to $m^{\th}$ order} with another Wigner matrix $M'_n= (\xi'_{ij})_{1\leq i,j \leq n}$ for some $m \geq 1$ if one has 
$$\E (\Re \xi_{ij})^a (\Im \xi_{ij})^b = \E (\Re \xi'_{ij})^a (\Im \xi'_{ij})^b$$
whenever $a,b \in \N$ with $a+b \leq m$.
\end{definition}

The bulk distribution of the eigenvalues $\lambda_1(M_n),\ldots,\lambda_n(M_n)$ of a Wigner (and in particular, GUE) matrix is governed by the \emph{Wigner semicircle law}.  Indeed, if we let $N_I(M_n)$ denote the number of eigenvalues of $M_n$ in an interval $I$, and we assume Condition {\condone} for some $C_0>2$, then with probability\footnote{See Section \ref{notation-sec} for the conventions for asymptotic notation such as $o(1)$ that are used in this paper.} $1-o(1)$, we have the asymptotic
$$ N_{\sqrt{n} I}(M_n) = n \int_I \rho_\sc(u)\ du + o(n)$$
uniformly in $I$, where $\rho_\sc$ is the Wigner semi-circular distribution
$$ \rho_\sc(u) := \frac{1}{2\pi} (4-u^2)_+^{1/2};$$
see e.g. \cite{AGZ}.
Informally, this law indicates that the eigenvalues $\lambda_i(M_n)$ are mostly contained in the interval $[-2\sqrt{n},2\sqrt{n}]$, and for any energy level $u$ in the bulk region $-2+\eps \leq u \leq 2-\eps$ for some fixed $\eps>0$, the average eigenvalue spacing should be $\frac{1}{\sqrt{n} \rho_\sc(u)}$ near $\sqrt{n} u$.

Now let $M_n$ be drawn from GUE.  The distribution of the eigenvalues $\lambda_1(M_n),\ldots,\lambda_n(M_n)$ are then well-understood.  If we define the $k$-point correlation functions $\rho^{(n)}_k: \R^k \to \R^+$ for $0 \leq k \leq n$ to be the unique symmetric continuous function for which
$$ \E \sum_{1 \leq i_1 < \ldots < i_k \leq n} F(\lambda_{i_1}(M_n),\ldots,\lambda_{i_k}(M_n)) = \int_{\R^k} F(x_1,\ldots,x_k) \rho^{(n)}_k(x_1,\ldots,x_k)\ dx_1 \ldots dx_k$$
for any continuous function $F$ which is compactly supported in the region $\{ x_1 \leq \ldots \leq x_k \}$, then one has the well-known formula of Dyson \cite{dyson}
$$ \rho^{(n)}_n(x_1,\ldots,x_n) = \frac{1}{(2\pi)^{n/2}} e^{-\sum_{i=1}^n x_i^2/2} \prod_{1 \leq i < j \leq n} (x_i-x_j)^2$$
and the \emph{Gaudin-Mehta formula}
$$ \rho^{(n)}_k(x_1,\ldots,x_k) = \det( K^{(n)}(x_i,x_j) )_{1 \leq i,j \leq k}$$
where $K^{(n)}(x,y)$ is the kernel
\begin{equation}\label{kform}
 K^{(n)}(x,y) := \sum_{k=0}^{n-1} P_k(x) e^{-x^2/4} P_k(y) e^{-y^2/4}
\end{equation}
and $P_0(x), P_1(x), \ldots$ are the $L^2$-normalised orthogonal polynomials with respect to the measure $e^{-x^2/2}\ dx$ (and are thus essentially Hermite polynomials); see e.g. \cite{Meh} or \cite{AGZ}.  In particular, the functions $P_k(x) e^{-x^2/4}$ for $i=0,\ldots,n-1$ are an orthonormal basis to the subspace $V^{(n)}$ of $L^2(\R) = L^2(\R, dx)$ spanned by $x^i e^{-x^2/4}$ for $i=0,\ldots,n-1$, thus the orthogonal projection $P^{(n)}$ to this subspace is given by the formula
$$P^{(n)} f(x) = \int_\R K^{(n)}(x,y) f(y)\ dy$$
for any $f \in L^2(\R)$.

Applying the inclusion-exclusion formula, the Gaudin-Mehta formula implies that for any interval\footnote{One can generalise this formula from intervals to arbitrary Borel measurable sets, but in our applications we will only need the interval case.  Similarly for many of the other determinantal process identities used in this paper.}  $I$, the probability $\P(N_I(M_n)=0)$ that $M_n$ has no eigenvalues in $I$, where $N_I(M_n)$ is the number of eigenvalues of $M_n$ in $I$, is equal to 
$$ \P(N_I(M_n)=0) = \sum_{k=0}^n \frac{(-1)^k}{k!} \int_I \ldots \int_I \det( K^{(n)}(x_i,x_j) )_{1 \leq i,j \leq k}\ dx_1 \ldots dx_k.$$
One can also express this probability as a Fredholm determinant
$$ \P(N_I(M_n)=0) = \det( 1 - 1_I P^{(n)} 1_I ),$$
where we view the indicator function $1_I$ as a multiplier operator on $L^2(\R)$.

The asymptotics of $K^{(n)}$ as $n \to \infty$ are also well understood, especially in the bulk of the spectrum\footnote{For the edge of the spectrum, one can control individual eigenvalues instead by the Tracy-Widom law \cite{TW-first}.  There is however a transitional regime between the bulk and the edge which is not covered by either our results or by the Tracy-Widom law, e.g. when $\min(i,n-i)$ is comparable to $n^\theta$ for some fixed $0<\theta<1$, and which may be worth further attention.  Given that convergence to the sine kernel is also known in such regimes, one would expect the main results of this paper to extend to these settings, although to make this intuition rigorous would require some careful argument which we do not pursue here.}, which in this normalisation corresponds to the interval $[(-2+\eps)\sqrt{n}, (2-\eps)\sqrt{n}]$ for any fixed $\eps>0$.  Indeed, if $-2+\eps < u < 2+\eps$ and $x, y$ are bounded uniformly in $n$, then from the Plancherel-Rotarch asymptotics for Hermite polynomials one has 
\begin{equation}\label{asym}
\frac{1}{\rho_\sc(u) \sqrt{n}} K^{(n)}( u \sqrt{n} + \frac{x}{\rho_\sc(u) \sqrt{n}}, u \sqrt{n} + \frac{y}{\rho_\sc(u) \sqrt{n}} ) = K_\Dyson(x,y) + o(1)
\end{equation}
where $K_\Dyson$ is the Dyson sine kernel
$$ K_\Dyson(x,y) := \frac{\sin(\pi(x-y))}{\pi(x-y)}$$
with the usual convention that $K(x,x)=1$; see e.g. \cite{Meh}, \cite{AGZ}, or \cite[Corollary 1]{dy}.  Note that the normalisations in \eqref{asym} are consistent with the heuristic, from the Wigner semi-circular law, that the mean eigenvalue spacing at $u\sqrt{n}$ is $\frac{1}{\rho_\sc(u)\sqrt{n}}$.  We observe that the Dyson sine kernel is also the kernel to the orthogonal projection $P_\Dyson$ to those functions $f \in L^2(\R, dx)$ whose Fourier transform
$$ \hat f(\xi) := \int_\R e^{-2\pi i x \xi} f(x)\ dx$$
is supported on the interval $[-1/2,1/2]$.

From \eqref{asym} and some careful treatment of error terms (see e.g. \cite[Chapter 3]{AGZ}) one obtains that
$$ \P( N_{u \sqrt{n} + \frac{1}{\rho_\sc(u) \sqrt{n}} I}(M_n)=0 ) =
\sum_{k=0}^\infty \frac{(-1)^k}{k!} \int_I \ldots \int_I \det( K_\Dyson(x_i,x_j) )_{1 \leq i,j \leq k}\ dx_1 \ldots dx_k + o(1),$$
or in Fredholm determinant form,
$$ \P( N_{u \sqrt{n} + \frac{1}{\rho_\sc(u) \sqrt{n}} I}(M_n)=0 ) = \det( 1 - 1_I P_\Dyson 1_I ) + o(1).$$
Note that the kernel $K_\Dyson(x,y) 1_I(y)$ of $P_\Dyson 1_I$ is square-integrable, and so $P_\Dyson 1_I$ is in the Hilbert-Schmidt class, and so $1_I P_\Dyson 1_I = (P_\Dyson 1_I)^* (P_\Dyson 1_I)$ is trace class.

This asymptotic can in turn be used to control the distribution of the averaged gap spacing distribution.  Indeed, if $1 < t_n < n$ is any sequence such that $1/t_n, t_n/n = o(1)$, then for any $-2+\eps < u < 2-\eps$ and $s > 0$ independent of $n$, the quantity
$$ S(s,t_n,u,M_n) := \frac{\# \{ 1 \leq i \leq n-1: \lambda_{i+1}(M_n)-\lambda_i(M_n) \leq \frac{s}{\sqrt{n} \rho_\sc(u)}; |\lambda_i(M_n)-u\sqrt{n}| \leq \frac{t_n}{\sqrt{n}}\}}{2t_n}$$
has the asymptotic
\begin{equation}\label{stum}
 S(s,t_n,u,M_n) = \int_0^s p(y)\ dy + o(1)
\end{equation}
where $p$ is the \emph{Gaudin distribution} (or \emph{Gaudin-Mehta distribution})
$$ p(y) := \frac{d^2}{dy^2} \det(1 - 1_{[0,y]} P_\Dyson 1_{[0,y]}),$$
or equivalently
\begin{equation}\label{equiv}
 \det(1 - 1_{[0,y]} P_\Dyson 1_{[0,y]}) = \int_y^\infty p(z) (z-y)\ dz;
 \end{equation}
see \cite{DKMVZ} for details.  The quantity $\det(1 - 1_{[0,y]} P_\Dyson 1_{[0,y]})$ (and hence the Gaudin distribution $p(y)$) can also be expressed in terms of a solution to a Painlev\'e V ordinary differential equation.  More precisely, one has
$$ \det(1 - 1_{[0,y]} P_\Dyson 1_{[0,y]}) = \exp\left( \int_0^{\pi y} \frac{\sigma(x)}{x}\ dx\right)$$
where $\sigma$ solves the ODE
$$ (x\sigma'')^2 + 4 (x\sigma'-\sigma)(x\sigma'-\sigma+(\sigma')^2) = 0$$
with boundary condition $\sigma(x) \sim -\frac{x}{\pi}$ as $x \to 0$; see \cite{JMM} (or the later treatment in \cite{TW}).  Among other things, this implies that the Gaudin distribution $p$ and all of its derivatives are\footnote{Indeed, the famous \emph{Wigner surmise} predicts the reasonably accurate approximation $p(x) \approx \frac{1}{2} \pi x e^{-\pi x^2/4}$; see e.g. \cite{Meh}.} smooth, bounded, and rapidly decreasing on $(0,+\infty)$.

We also remark that the extreme values of the gaps $\lambda_{i+1}(M_n)-\lambda_i(M_n)$ are also well understood; see \cite{bb}.  However, our focus here will be on the bulk distribution of these gaps rather than on the tail behaviour.

In \cite{TVlocal1}, a \emph{Four Moment Theorem} for the eigenvalues of Wigner matrices was established, which roughly speaking asserts that the fine scale statistics of these eigenvalues depend only on the first four moments of the coefficients of the Wigner matrix, so long as some decay condition (such as Condition {\condone}) is obeyed.  In particular, by applying this theorem to the asymptotic \eqref{stum} for GUE matrices, one obtains

\begin{corollary}  The asymptotic \eqref{stum} is also valid for Wigner matrices $M_n$ which obey Condition {\condone} for some sufficiently large absolute constant $C_0$, and which match moments with GUE to fourth order.
\end{corollary}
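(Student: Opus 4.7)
The plan is to deduce the corollary from the GUE version of \eqref{stum} by a routine application of the Four Moment Theorem of \cite{TVlocal1}. That theorem asserts that for a smooth test function $F$ of a bounded number of bulk eigenvalues with derivatives of size at most $n^{c_0}$ for some small fixed $c_0 > 0$, one has
$$ \big|\E F(\lambda_{i_1}(M_n),\ldots,\lambda_{i_k}(M_n)) - \E F(\lambda_{i_1}(M'_n),\ldots,\lambda_{i_k}(M'_n))\big| = O(n^{-c}) $$
for some absolute $c > 0$ whenever $M_n$ and $M'_n$ are Wigner ensembles obeying Condition {\condone} that match moments to fourth order. Since our hypotheses give such matching against GUE, it suffices to transfer the expectation of $S(s,t_n,u,\cdot)$ from GUE to $M_n$ with $o(1)$ error.

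First, I would express
$$ S(s,t_n,u,M_n) = \frac{1}{2t_n} \sum_{i=1}^{n-1} 1_{(-\infty,s]}\big((\lambda_{i+1}(M_n)-\lambda_i(M_n))\sqrt{n}\rho_\sc(u)\big) 1_{[-1,1]}\big((\lambda_i(M_n)-u\sqrt{n})\sqrt{n}/t_n\big) $$
and replace each indicator by a smooth cutoff of transition width $n^{-\delta}$ for a small fixed $\delta > 0$, chosen so that the resulting test function has derivatives bounded by $n^{c_0}$. By eigenvalue rigidity (also from \cite{TVlocal1}), only those indices $i$ whose classical location lies within $O(t_n/\sqrt{n})$ of $u\sqrt{n}$ contribute nontrivially, an index range of length $O(t_n)$. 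Applying the Four Moment Theorem to each summand and summing with the weight $1/(2t_n)$, the total comparison error between the Wigner and GUE expectations of the smoothed $S$ is $O(t_n \cdot n^{-c}/t_n) = O(n^{-c}) = o(1)$.

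Next, I would bound the smoothing error by the expected (normalised) number of indices $i$ for which either the rescaled gap lies in $[s-n^{-\delta}, s+n^{-\delta}]$ or the rescaled position lies within $n^{-\delta}$ of $\pm 1$. For GUE, both quantities are directly controlled by \eqref{stum} applied to narrow intervals, which together with the smoothness and boundedness of $p$ yields a bound of $O(n^{-\delta}) = o(1)$. For $M_n$, the same bound follows from another application of the Four Moment Theorem, comparing these "narrow" $S$-like quantities to their GUE counterparts. Chaining these steps gives the desired asymptotic for $M_n$ in expectation; convergence in the same mode as the GUE case then follows by applying the identical argument to $S^2$ (or by a routine second-moment calculation using the smoothed $S$).

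The main obstacle is the matching of scales: the cutoffs naturally live on the microscopic gap scale $1/\sqrt{n}$, whereas the Four Moment Theorem only permits derivatives of size $n^{c_0}$ with $c_0$ small. Accommodating this requires splitting each indicator into a smooth bulk (handled by the Four Moment comparison) and a thin boundary layer (handled by level repulsion and rigidity), precisely as in the analogous arguments in \cite{TVlocal1}. Given the availability of all of these tools in that reference, the extension of \eqref{stum} from GUE to Wigner is indeed routine, as claimed.
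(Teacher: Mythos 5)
Your proposal is correct and takes essentially the same route as the paper, whose proof of this corollary is simply a citation of \cite[Theorem 9]{TVlocal1} --- which is exactly the smoothing-plus-Four-Moment-comparison argument you outline (smooth cutoffs at scale $n^{-\delta}$, term-by-term comparison over the $O(t_n)$ relevant indices, and level repulsion plus rigidity to control the boundary layers). The one detail worth flagging, which the paper does note, is that the Four Moment Theorem as stated in \cite{TVlocal1} requires exponential decay of the matrix entries, so to work under the finite-moment Condition {\condone} one must invoke the refinement from \cite{TVlocal3} (or \cite{TVeigenvector}, \cite{knowles}).
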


\begin{proof} See \cite[Theorem 9]{TVlocal1}.  Strictly speaking, the arguments in that paper require an exponential decay hypothesis on the coefficients on $M_n$ rather than a finite moment condition, because the four moment theorem in that paper also has a similar requirement.  However, the refinement to the four moment theorem established in the subsequent paper \cite{TVlocal3} (or in the later papers \cite{TVeigenvector}, \cite{knowles}) relaxes that exponential decay condition to a finite moment condition.
\end{proof}

We remark that the moment matching hypothesis in this corollary can in fact be removed by combining the above argument with some similar results obtained (by a different method) in \cite{Joh1}, \cite{EPRSY}; see \cite{ERSTVY}.

The Wigner semi-circle law predicts that the location of an individual eigenvalue $\lambda_i(M_n)$ of a Wigner or GUE matrix $M_n$ for $i$ in the bulk region $\eps n \leq i \leq (1-\eps) n$ should be approximately $\sqrt{n} u$, where $u = u_{i/n}$ is the \emph{classical location} of the eigenvalue, given by the formula
\begin{equation}\label{classical}
 \int_{-\infty}^u \rho_\sc(y)\ dy = \frac{i}{n}.
\end{equation}
Indeed, it is a result of Gustavsson \cite{Gus} that $\frac{\lambda_i(M_n) - \sqrt{n} u}{\sqrt{\log n/2\pi^2}/\sqrt{n} \rho_\sc(u)}$ converges in distribution to the standard real Gaussian distribution $N(0,1)_\R$, or more informally that
\begin{equation}\label{heuris}
 \lambda_i(M_n) \approx N( \sqrt{n} u, \frac{\log n/2\pi^2}{(\sqrt{n} \rho_\sc(u))^2} ).
\end{equation}
Note that the standard deviation $\frac{\sqrt{\log n/2\pi^2}}{\sqrt{n} \rho_\sc(u)}$ here exceeds the mean eigenvalue spacing $\frac{1}{\sqrt{n} \rho_\sc(u)}$ by a factor comparable to $\sqrt{\log n}$.  If one heuristically applies this approximation \eqref{heuris} to the gap distribution law 
\eqref{stum}, one is led to the conjecture that the normalised eigenvalue gap
$$ \frac{\lambda_{i+1}(M_n) - \lambda_i(M_n)}{1/(\sqrt{n} \rho_\sc(u))}$$
should converge in distribution to the Gaudin distribution, in the sense that
\begin{equation}\label{single-gap}
\P( \frac{\lambda_{i+1}(M_n) - \lambda_i(M_n)}{1/(\sqrt{n} \rho_\sc(u))} \leq s ) = \int_0^s p(y)\ dy + o(1)
\end{equation}
for any fixed $s>0$.

Unfortunately, this is not quite a rigorous proof of \eqref{single-gap}.  The problem is that the asymptotic \eqref{stum} involves not just a single eigenvalue gap $\lambda_{i+1}-\lambda_i$, but is instead an average over all eigenvalue gaps near the energy level $\sqrt{n} u$.  By \eqref{heuris}, one is then forced to consider the contributions of at least $\gg \sqrt{\log n}$ different values of $i$ that could contribute to \eqref{stum}.  One would of course expect the behaviour of $\lambda_{i+1}-\lambda_i$ for adjacent values of $i$ to be essentially identical, in which case one could pass from the averaged gap distribution \eqref{stum} to the individual gap distribution \eqref{single-gap}.  However, it is \emph{a priori} conceivable (though admittedly quite strange) that there is non-trivial dependence on $i$, for instance that $\lambda_{i+1}-\lambda_i$ might tend to be larger than predicted by the Gaudin distribution for even $i$, and smaller than predicted for odd $i$, with the two effects canceling out in averaged statistics such as \eqref{stum}, but not in non-averaged statistics such as \eqref{single-gap}.

Our main result rules out such a pathological possibility:

\begin{theorem}[Individual gap spacing]\label{gue-main}  Let $M_n$ be drawn from GUE, and let $\eps n \leq i \leq (1-\eps) n$ for some fixed $\eps>0$.  Then one has the asymptotic \eqref{single-gap} for any fixed $s>0$, where $u = u_{i/n}$ is given by \eqref{classical}.
\end{theorem}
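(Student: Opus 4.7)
Let $\tilde M_n$ denote the rescaled matrix with eigenvalues $\tilde\lambda_j := \rho_\sc(u)\sqrt{n}(\lambda_j(M_n) - u\sqrt{n})$, so that \eqref{single-gap} becomes the statement $\P(\tilde\lambda_{i+1} - \tilde\lambda_i \leq s) \to \int_0^s p(y)\, dy$. The plan begins with the elementary identity
\[
(\tilde\lambda_{i+1} - \tilde\lambda_i - s)_+ = \int_\R \mathbf{1}\{N_{(-\infty,x]}(\tilde M_n) = i\}\,\mathbf{1}\{N_{[x,x+s]}(\tilde M_n) = 0\}\, dx,
\]
since the integrand on the right is the indicator of the interval $[\tilde\lambda_i, \tilde\lambda_{i+1} - s)$. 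Taking expectations defines
\[
F_n(s) := \int_\R \P\bigl(N_{(-\infty,x]}(\tilde M_n) = i,\; N_{[x,x+s]}(\tilde M_n) = 0\bigr)\, dx = \E(\tilde\lambda_{i+1} - \tilde\lambda_i - s)_+,
\]
a convex function of $s$ with $-F_n'(s) = \P(\tilde\lambda_{i+1} - \tilde\lambda_i > s)$. By \eqref{equiv}, the corresponding Dyson Sine quantity $F_\Dyson(s) := \det(1 - 1_{[0,s]} P_\Dyson 1_{[0,s]}) = \int_s^\infty (t-s)p(t)\, dt$ is also convex in $s$ with $-F_\Dyson'(s) = \int_s^\infty p(t)\, dt$, and is smooth on $(0,\infty)$. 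Since pointwise convergence of convex functions forces convergence of derivatives at points where the limit derivative is continuous, the theorem is reduced to proving $F_n(s) \to F_\Dyson(s)$ for each fixed $s > 0$.

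\textbf{Paragraph 2 (factorization strategy and known asymptotics).} I would prove this via the asymptotic factorization
\[
\P\bigl(N_{(-\infty,x]}(\tilde M_n) = i,\; N_{[x,x+s]}(\tilde M_n) = 0\bigr) = \P(N_{(-\infty,x]}(\tilde M_n) = i) \cdot \P(N_{[x,x+s]}(\tilde M_n) = 0) \cdot (1 + o(1)),
\]
with the $o(1)$ uniform over a window $|x| \leq C\sqrt{\log n}$. Outside the window, the first marginal decays super-polynomially in $\sqrt{\log n}$ by the Gustavsson CLT \cite{Gus} and eigenvalue rigidity, and the tails of the integral are $o(1)$. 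Inside the window, the second marginal converges locally uniformly to $F_\Dyson(s)$ by the Plancherel-Rotarch asymptotic \eqref{asym} applied to the Fredholm determinant $\det(1 - 1_{[x,x+s]} P^{(n)} 1_{[x,x+s]})$, while by a standard rigidity-plus-moment argument the rescaled mean gap satisfies
\[
\int_\R \P(N_{(-\infty,x]}(\tilde M_n) = i)\, dx = \E(\tilde\lambda_{i+1} - \tilde\lambda_i) \to 1,
\]
matching the unit mean of the Gaudin distribution. Multiplying these two asymptotics yields $F_n(s) \to F_\Dyson(s) \cdot 1 = F_\Dyson(s)$.

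\textbf{Paragraph 3 (main obstacle: approximate independence).} The difficult step is the factorization, i.e., establishing the asymptotic independence of the global event $\{N_A(\tilde M_n) = i\}$ (with $A := (-\infty, x]$) from the local event $\{N_B(\tilde M_n) = 0\}$ (with $B := [x, x+s]$). The approach exploits the projection-kernel structure of the GUE: the eigenvalues form a projection determinantal process with kernel $P^{(n)}$, and a direct calculation from the Dyson joint density shows that the law conditional on $\{N_B = 0\}$ is again a projection determinantal process, now with kernel $\tilde P^{(n)}$ the orthogonal projection onto the subspace $1_{B^c} V^{(n)} \subset L^2(B^c)$. Under either law, by a standard representation of projection DPPs, $N_A$ is distributed as a sum of independent Bernoulli random variables whose parameters are the eigenvalues of $1_A P^{(n)} 1_A$, respectively of $1_A \tilde P^{(n)} 1_A$. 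Since $|B|$ is bounded, $\tilde P^{(n)} - P^{(n)}$ is a trace-class perturbation of small trace — the conditioning redistributes the $O(s)$ expected points of $B$ to a neighborhood of $B$, of which only $O(1)$ migrate into $A$. Because the variance of $N_A$ grows like $\log n$ (by Gustavsson's CLT), a mean shift of order $O(1)$ is far below the natural $\sqrt{\log n}$ fluctuation scale, and a quantitative local CLT for sums of independent Bernoullis then yields a multiplicative error of $1 + O(1/\sqrt{\log n})$ in the point probability $\P(N_A = i)$, uniformly over the window. After integration this contributes $o(1)$ to $F_n(s) - F_\Dyson(s)$. The technical heart of the proof is upgrading the abstract "$O(1)$ trace shift" statement to these uniform quantitative bounds, which will require careful control of the characteristic function $z \mapsto \det(1 + (z-1)\, 1_A P^{(n)} 1_A)$ on the unit circle; the projection structure together with \eqref{asym} ensures that a positive fraction of the Bernoulli parameters are bounded away from $\{0,1\}$, which is what drives the local-CLT estimate.
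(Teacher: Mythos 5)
Your proposal follows essentially the same route as the paper: the identity $\E(X-s)_+ = \int_\R \P(N_{(-\infty,x)}(\tilde M_n)=i,\ N_{[x,x+s]}(\tilde M_n)=0)\,dx$, a tail cutoff at a slowly growing multiple of $\sqrt{\log n}$, the observation that conditioning a projection determinantal process on an empty interval yields another projection determinantal process, and a Bernoulli-sum local CLT showing that an $O(1)$ perturbation of the mean and variance of $N_{(-\infty,x)}$ is invisible at the $\sqrt{\log n}$ fluctuation scale. The step you flag as the ``technical heart'' --- upgrading the heuristic $O(1)$ trace shift to a uniform quantitative bound --- is exactly where the paper works hardest, inverting $1 - P^{(n)} 1_{[x,x+s]} P^{(n)}$ by comparison with the sine-kernel projection $P_\Dyson$ via the Fredholm alternative and a Neumann series, and controlling the difference between the conditioned and unconditioned projections in the nuclear norm.
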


Applying the four moment theorem from \cite{TVlocal1} (with the extension to the finite moment setting in \cite{TVlocal3}), one obtains an immediate corollary:

\begin{corollary}\label{wigner-main}  The conclusion of Theorem \ref{gue-main} is also valid for Wigner matrices $M_n$ which obey Condition {\condone} for some sufficiently large absolute constant $C_0$, and which match moments with GUE to fourth order.
\end{corollary}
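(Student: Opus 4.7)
\textbf{Plan for Corollary \ref{wigner-main}.}  The plan is to reduce the Wigner statement to the GUE statement (Theorem \ref{gue-main}) purely by invoking the Four Moment Theorem of \cite{TVlocal1}, in its refined version from \cite{TVlocal3} that only requires Condition \condone\ with $C_0$ sufficiently large rather than an exponential-decay hypothesis.  The event in \eqref{single-gap} is a two-eigenvalue statistic at the scale of the mean eigenvalue spacing, so it lies exactly in the regime that the Four Moment Theorem controls.

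\textbf{Step 1: Rewrite the event as a smooth test function.}  Fix $\eps>0$, $s>0$, and $i$ with $\eps n \le i \le (1-\eps)n$.  Set $\rho := \rho_\sc(u_{i/n})$ and consider the rescaled gap
\[
 G_i(M_n) := \sqrt{n}\rho\, (\lambda_{i+1}(M_n) - \lambda_i(M_n)).
\]
We want $\P(G_i(M_n)\le s) = \int_0^s p(y)\,dy + o(1)$.  Pick a small parameter $\delta=\delta(n)$ tending to $0$ slowly (say $\delta = n^{-c}$ for a small constant $c>0$) and a smooth non-increasing cutoff $\chi_\delta:\R\to[0,1]$ with $\chi_\delta = 1$ on $(-\infty,s]$, $\chi_\delta=0$ on $[s+\delta,\infty)$, and $\|\chi_\delta^{(k)}\|_\infty \le C_k \delta^{-k}$.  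Then
\[
 \chi_\delta(G_i(M_n)-\delta) \;\le\; \mathbf{1}_{G_i(M_n)\le s} \;\le\; \chi_\delta(G_i(M_n)),
\]
so it suffices to compare $\E\, \chi_\delta(G_i(M_n))$ with $\E\,\chi_\delta(G_i(M_n^{\text{GUE}}))$ and to show that the $O(\delta)$ buffer contributes $o(1)$ to the GUE probability.

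\textbf{Step 2: Apply the Four Moment Theorem.}  The Four Moment Theorem \cite[Theorem 15]{TVlocal1} (with the moment relaxation of \cite{TVlocal3}) asserts that if $M_n, M_n'$ are two Wigner ensembles obeying Condition \condone\ that match moments to fourth order, then for any bulk indices $i_1,\dots,i_k$ and any smooth $F:\R^k\to\R$ with $\|\nabla^j F\|_\infty \le n^{c_0}$ for $0\le j \le 5$ (say) and some absolute $c_0>0$,
\[
 \bigl|\E F(\sqrt n\rho\lambda_{i_1}(M_n),\dots) - \E F(\sqrt n\rho\lambda_{i_1}(M_n'),\dots)\bigr| \le n^{-c_1}
\]
for some $c_1>0$ depending on $c_0,\eps$.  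Apply this with $k=2$, $(i_1,i_2)=(i,i+1)$, and $F(x,y) := \chi_\delta(y-x)$.  For $\delta = n^{-c}$ with $c$ small enough relative to $c_0$ the derivative bounds on $F$ hold, so
\[
 \bigl|\E \chi_\delta(G_i(M_n)) - \E\chi_\delta(G_i(M_n^{\text{GUE}}))\bigr| = o(1),
\]
and similarly for the shifted cutoff $\chi_\delta(\,\cdot\,-\delta)$.

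\textbf{Step 3: Control the $\delta$-buffer on the GUE side.}  By Theorem \ref{gue-main} the distribution of $G_i(M_n^{\text{GUE}})$ has density $p + o(1)$ on any fixed compact interval, and $p$ is smooth and bounded (as noted just after \eqref{equiv}).  Hence
\[
 \E\chi_\delta(G_i(M_n^{\text{GUE}})) - \E\chi_\delta(G_i(M_n^{\text{GUE}})-\delta\text{'d})
= \P(s-\delta < G_i \le s+\delta) = O(\delta) + o(1) = o(1).
\]
Combining Steps 2 and 3 with the sandwich in Step 1 gives $\P(G_i(M_n)\le s) = \P(G_i(M_n^{\text{GUE}})\le s) + o(1)$, and invoking Theorem \ref{gue-main} on the right-hand side yields \eqref{single-gap} for $M_n$.

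\textbf{Main obstacle.}  There is no substantial obstacle here: every ingredient is off the shelf.  The only care needed is to check that the version of the Four Moment Theorem in \cite{TVlocal3} (or the equivalent statements in \cite{TVeigenvector}, \cite{knowles}) indeed covers the Condition \condone\ setting with the required $C_0$, and to choose the smoothing parameter $\delta$ small enough that the derivative bounds $\|F^{(j)}\|_\infty \lesssim \delta^{-j}$ are absorbed into the $n^{c_0}$ budget of the Four Moment Theorem while still being much larger than the $o(1)$ error from Theorem \ref{gue-main} and from the density of the Gaudin distribution.  This is a routine balancing of exponents.
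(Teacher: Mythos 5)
Your proposal is correct and is essentially the argument the paper has in mind: the paper's proof simply defers to the proof of Theorem 9 (and Corollary 21) of \cite{TVlocal1}, which is exactly this smooth-cutoff-plus-Four-Moment-Theorem transference with the $\delta$-buffer absorbed via the boundedness of the limiting (Gaudin) density. The only (cosmetic) slip is the direction of the shift in your sandwich: the lower bound should be $\chi_\delta(G_i(M_n)+\delta)\le \mathbf{1}_{G_i(M_n)\le s}$, since $\chi_\delta(G_i-\delta)$ dominates rather than minorizes the indicator.
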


\begin{proof}  This can be established by repeating the proof of \cite[Theorem 9]{TVlocal1} (in fact the argument is even simpler than this, because one is working with a single eigenvalue gap rather than with an average, and can proceed more analogously to the proof of \cite[Corollary 21]{TVlocal1}).  We omit the details.
\end{proof}

In view of the results in \cite{ERSTVY}, it is natural to conjecture that the moment matching condition can be removed.  Following \cite{ERSTVY}, it would be natural to use heat flow methods to do so, in particular by trying to extend Theorem \ref{gue-main} to the \emph{gauss divisible} ensembles studied in \cite{Joh1}.  However, the methods in this paper rely very heavily on the determinantal form of the joint eigenvalue distribution of GUE (and not just on control of the $k$-point correlation functions); the formulae in \cite{Joh1} also have some determinantal structure, but it is unclear to us whether this similarity of structure is sufficient to replicate the arguments\footnote{By using heat flow methods such as the method of local relaxation flow \cite{ESY}, one can obtain control on energy-averaged correlation functions in this setting, and similarly for non-classical $\beta$-ensembles $\beta \neq 1,2,4$ as was done recently in \cite{bourgade}.  Such bounds are sufficient to obtain averaged gap information of the form \eqref{stum} (at least for values of $t_n$ that grow faster than logarithmic), but it is not obvious how to isolate a single eigenvalue gap to then obtain \eqref{single-gap}.}.  On the other hand, we expect analogues Theorem \ref{gue-main} to be establishable for other ensembles with a determinantal form, such as GOE and GSE, or to more general $\beta$ ensembles involving a non-quadratic potential for the classical values $1,2,4$ of $\beta$.  We will not pursue these matters here.

The key to proving Theorem \ref{gue-main} lies in establishing the approximate independence\footnote{It may be surprising to the experts that the counting functions on $(-\infty,x)$ and $[x,x+s]$ are approximately independent, as the intervals are adjacent.  The point is that while there is a correlation between the two counting functions, the covariance between them is essentially of order $O(1)$, whilst the variance of $N_{(-\infty,x)}$ is of order $\log n$, and so the correlation between the two random variables is ends up being asymptotically negligible. To put it another way, most of the random fluctuation of $N_{(-\infty,x)}$ comes from the portion of the spectrum that is far away from $x$, and this contribution will be almost completely decoupled from the spectrum at $[x,x+s]$.} of the eigenvalue counting function $N_{(-\infty,x)}(\tilde M_n)$ from the event that $\tilde M_n$ has no eigenvalues in a short interval $[x,x+s]$ (i.e. that $N_{[x,x+s]}(\tilde M_n)=0$), where $\tilde M_n$ is a suitably rescaled version of $M_n$.  Roughly speaking, this independence, coupled with a central limit theorem for $N_{(-\infty,x)}(\tilde M_n)$, will imply that the distribution of a gap $\lambda_{i+1}(M_n)-\lambda_i(M_n)$ is essentially invariant with respect to small changes in the $i$ parameter.  To obtain this approximate independence, we use the properties of determinantal processes, and in particular the fact that a determinantal point process $\Sigma$, when conditioned on the event that a given interval such as $[x,x+s]$ contains no elements of $\Sigma$, remains a determinantal point process (though with a slightly different kernel).  The main difficulty is then to ensure that the new kernel is close to the old kernel in a suitable sense (more specifically, we will compare the two kernels in the nuclear norm $S^1$).

We thank Peter Forrester, Van Vu, and the anonymous referee for corrections.

\section{Notation}\label{notation-sec}

In this paper, $n$ will be an asymptotic parameter going to infinity.  A quantity is said to be \emph{fixed} if it does not depend on $n$; if a quantity is not specified as fixed, then it is permitted to vary with $n$.  Given two quantities $X, Y$, we write $X = O(Y)$, $X \ll Y$, or $Y \gg X$ if we have $|X| \leq CY$ for some fixed $C$, and $X = o(Y)$ if $X/Y$ goes to zero as $n \to \infty$.

An \emph{interval} will be a connected subset of the real line, which may possibly be half-infinite or infinite.  If $I$ is an interval, we use $I^c := \R \backslash I$ to denote its complement.

We use $\sqrt{-1}$ to denote the imaginary unit, in order to free up the symbol $i$ for other purposes, such as indexing eigenvalues.

Given a bounded operator $A$ on a Hilbert space $H$, we denote the operator norm of $A$ as $\|A\|_{\op}$. We will also need the \emph{Hilbert-Schmidt norm} (or \emph{Frobenius norm})
$$ \|A\|_{HS} := (\tr(A^* A))^{1/2}= (\tr(A A^*))^{1/2},$$
with the convention that this norm is infinite if $A^* A$ or $AA^*$ is not trace class.  Similarly, we will need the \emph{Schatten $1$-norm} (or \emph{nuclear norm})
$$ \|A\|_{S^1} := \tr( (A^* A)^{1/2} ) = \tr( (AA^*)^{1/2} ),$$
which is finite when $A$ is trace class.  Note that if $A$ is compact with non-zero singular values $\sigma_1,\sigma_2,\ldots$ then we have
\begin{align*}
\|A\|_{\op} &= \sup_i |\sigma_i| \\
\|A\|_{HS} &= (\sum_i |\sigma_i|^2)^{1/2} \\
\|A\|_{S^1} &= \sum_i |\sigma_i|.
\end{align*}
Indeed, one should view the operator, Hilbert-Schmidt, and nuclear norms as non-commutative versions of the $\ell^\infty$, $\ell^2$, and $\ell^1$ norms respectively.

For us, the reason for introducing the nuclear norm $S^1$ is that it controls the trace:
$$ |\tr A| \leq \|A\|_{S^1}.$$
On the other hand, the Hilbert-Schmidt and operator norms are significantly easier to estimate than the nuclear norm. To bridge the gap, we will rely heavily on the non-commutative H\"older inequalities
\begin{align*}
 \|AB\|_{\op} &\leq \|A\|_{\op} \|B\|_{\op} \\
 \|AB\|_{HS} &\leq \|A\|_{\op} \|B\|_{HS} \\
 \|AB\|_{HS} &\leq \|A\|_{HS} \|B\|_{\op} \\
 \|AB\|_{S^1} &\leq \|A\|_{\op} \|B\|_{S^1} \\
 \|AB\|_{S^1} &\leq \|A\|_{S^1} \|B\|_{\op} \\
 \|AB\|_{S^1} &\leq \|A\|_{HS} \|B\|_{HS};
\end{align*}
see e.g. \cite{bhatia}.  We will use these inequalities in this paper without further comment.

We remark that for integral operators
$$ Tf(x) := \int_\R K(x,y) f(y)\ dy$$
on $L^2(\R)$ for locally integrable $K$, the Hilbert-Schmidt norm of $T$ is given by
$$ \|T\|_{HS} = (\int_\R \int_\R |K(x,y)|^2\ dx dy)^{1/2}$$
when the right-hand side is finite.

\section{Some general theory of determinantal processes}

In this section we record some of the theory of determinantal processes which we will need.  We will not attempt to exhaustively describe this theory here, referring the interested reader to the surveys \cite{lyons}, \cite{sos} or \cite{hkpv} instead.  We will also not aim for maximum generality in this section, restricting attention to determinantal processes on $\R$, whose associated locally trace class operator $P$ will usually be an orthogonal projection, and often of finite rank.

Define a \emph{good kernel} to be a locally integrable function  $K: \R \times \R \to \C$, such that the associated integral operator
$$ P f(x) := \int_\R K(x,y) f(y)\ dy$$
can be extended from $C_c(\R)$ to a self-adjoint bounded operator on $L^2(\R)$, with spectrum in $[0,1]$.  Furthermore, we require that $P$ be locally trace class in the sense that for every compact interval $I$, the operator $1_I P 1_I$ is trace class; this will for instance be the case if $K$ is smooth.  If $K$ is a good kernel, then (as was shown in \cite{macchi}, \cite{sos}; see also \cite{hkpv} or \cite{AGZ}), $K$ defines a point process $\Sigma\subset \R$, i.e. a random subset\footnote{Strictly speaking, a point process is permitted to have multiplicity, so that it becomes a multiset rather than a set.  However, as we are restricting attention to kernels $K$ which are locally integrable, the determinantal point processes we consider will be almost surely simple, in the sense that no multiplicity occurs.} of $\R$ that is almost surely locally finite, with the $k$-point correlation functions
\begin{equation}\label{xk}
 \rho_k(x_1,\ldots,x_k) := \det(K(x_i,x_j))_{1 \leq i,j \leq k}
\end{equation}
for any $k \geq 0$, thus
$$ \E \prod_{i=1}^k \#(\Sigma \cap I_i) = \int_{I_1 \times \ldots \times I_k} \rho_k(x_1,\ldots,x_k)\ dx_1 \ldots dx_k$$
for any disjoint intervals $I_1,\ldots,I_k$.  This process is known as the \emph{determinantal point process} with kernel $K$.  

The distribution of a determinantal point process in an interval $I$ is described by the following lemma:

\begin{lemma}\label{dpp}  Let $\Sigma$ be a determinantal point process on $\R$ associated to a good kernel $K$ and associated operator $P$.  Let $I$ be a compact interval, and suppose that the operator $1_I P 1_I$ has non-zero eigenvalues $\lambda_1, \lambda_2, \ldots \in (0,1]$.  Then $\# (\Sigma \cap I)$ has the same distribution as $\sum_i \xi_i$, where the $\xi_i$ are jointly independent Bernoulli random variables, with each $\xi_i$ equalling $1$ with probability $\lambda_i$ and $0$ with probability $1-\lambda_i$.  In particular, one has
$$ \E \# (\Sigma \cap I) = \sum_i \lambda_i = \tr(1_I P 1_I) $$
and
$$ \Var \# (\Sigma \cap I) = \sum_i (1-\lambda_i) \lambda_i = \tr( (1 - 1_I P 1_I) 1_I P 1_I ),$$
and
$$ \P( \# (\Sigma \cap I) = 0 ) = \prod_i (1-\lambda_i) = \det( 1 - 1_I P 1_I ).$$
\end{lemma}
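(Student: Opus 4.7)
The plan is to compute the probability generating function of $N := \#(\Sigma \cap I)$ using the determinantal structure and match it with that of $\sum_i \xi_i$. Starting from the factorial moment identity
\begin{equation*}
\E \binom{N}{k} \;=\; \frac{1}{k!} \int_{I^k} \rho_k(x_1,\ldots,x_k)\, dx_1 \cdots dx_k,
\end{equation*}
which is a direct consequence of \eqref{xk} and the fact that $\binom{N}{k}$ counts unordered $k$-subsets of $\Sigma \cap I$, I would expand $z^N = \sum_k \binom{N}{k}(z-1)^k$ and interchange the sum with the expectation to reduce to integrals of $\det(K(x_i,x_j))_{1 \leq i,j \leq k}$ over $I^k$.

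The key step is then the identity
\begin{equation*}
\frac{1}{k!} \int_{I^k} \det(K(x_i,x_j))_{1 \leq i,j \leq k}\, dx_1 \cdots dx_k \;=\; \tr\bigl(\wedge^k(1_I P 1_I)\bigr) \;=\; e_k(\lambda_1,\lambda_2,\ldots),
\end{equation*}
where the first equality is a standard fact about $k$-th antisymmetric tensor powers of trace class integral operators, and the second follows from diagonalising the self-adjoint trace class operator $1_I P 1_I$. Summing the resulting series yields the Fredholm determinant formula
\begin{equation*}
\E z^N \;=\; \sum_{k=0}^\infty (z-1)^k e_k(\lambda_1,\lambda_2,\ldots) \;=\; \prod_i \bigl(1 + (z-1)\lambda_i\bigr) \;=\; \det\bigl(1 + (z-1)\cdot 1_I P 1_I\bigr).
\end{equation*}
A direct computation shows that $\sum_i \xi_i$ with independent $\xi_i \sim \mathrm{Bernoulli}(\lambda_i)$ has exactly this generating function, so the two distributions coincide. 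The three explicit formulas in the statement then drop out: setting $z = 0$ gives the hole probability $\prod_i (1 - \lambda_i) = \det(1 - 1_I P 1_I)$, while differentiating once or twice at $z = 1$ gives the mean $\sum_i \lambda_i$ and the variance $\sum_i \lambda_i(1-\lambda_i)$, which are re-expressed as $\tr(1_I P 1_I)$ and $\tr((1 - 1_I P 1_I) 1_I P 1_I)$ by the same diagonalisation.

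The principal technical obstacle is the analytic justification for the interchange of sum and expectation in the derivation of the generating function, together with convergence of the Fredholm expansion. Since $1_I P 1_I$ has spectrum in $[0,1]$ and is trace class by hypothesis, one has $\sum_i \lambda_i = \tr(1_I P 1_I) < \infty$, so both the infinite product and the Fredholm expansion converge absolutely and uniformly on compact subsets of $\C$. The factorial moments are correspondingly dominated by $(\tr(1_I P 1_I))^k / k!$, so $N$ has all exponential moments and is almost surely finite, and Fubini together with dominated convergence then legitimise the manipulations above.
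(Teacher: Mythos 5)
Your argument is correct: the generating-function computation $\E z^N = \det(1+(z-1) 1_I P 1_I) = \prod_i(1+(z-1)\lambda_i)$ is exactly the standard proof of this lemma, and it is the one given in the reference (\cite[Corollary 4.2.24]{AGZ}) that the paper cites in lieu of a proof. The analytic justifications you supply (absolute convergence of the Fredholm expansion and of the infinite product via $\sum_i \lambda_i = \tr(1_I P 1_I) < \infty$, and Fubini via the factorial-moment bound) are the right ones, so nothing is missing.
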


\begin{proof} See e.g. \cite[Corollary 4.2.24]{AGZ}.
\end{proof}

As a corollary of Lemma \ref{dpp}, we see that $\P( \#(\Sigma \cap I) = 0 ) > 0$ unless $P$ has an eigenfunction of eigenvalue $1$ that is supported on $I$.

An important special case of determinantal point processes arises when the operator $P$ is an orthogonal projection of some finite rank $n$, which is the situation with the GUE point process $\{ \lambda_1(M_n),\ldots,\lambda_n(M_n)\}$, which as discussed in the introduction is a determinantal point process with kernel $K^{(n)}$ given by \eqref{kform}.  In this case, the hypotheses on $P$ (i.e. self-adjoint trace class with eigenvalues in $[0,1]$) are automatically satisfied, and the determinantal point process $\Sigma$ is almost surely a set of cardinality $n$; see e.g. \cite{sos}, \cite{hkpv} or \cite{AGZ}.  In this situation, the $k$-point correlation functions $\rho_k$ vanish for $k>n$, and for $k<n$ we have the \emph{Gaudin lemma}
\begin{equation}\label{gaudin-lemma}
\rho_k(x_1,\ldots,x_k) = \frac{1}{n-k} \int_\R \rho_{k+1}(x_1,\ldots,x_{k+1})\ dx_{k+1}
\end{equation}
which allows one to recursively obtain the correlation functions from the $n$-point correlation function $\rho_n$ (which is essentially the joint density function of the $n$ elements of $\Sigma$).  Note that \eqref{gaudin-lemma} in fact holds for any point process whose cardinality is almost surely $n$, if the process is almost surely simple with locally integrable correlation functions.

If $V$ is the $n$-dimensional range of $P$, and $\phi_1,\ldots,\phi_n$ is an orthonormal basis for $V$, then the kernel $K$ of the orthogonal projection $P$ can be expressed explicitly as
$$ K(x,y) = \sum_{i=1}^n \phi_i(x) \overline{\phi_i(y)}$$
and thus (by the basic formula $\det(A^* A)= |\det(A)|^2$)
\begin{equation}\label{rhodi}
 \rho_n(x_1,\ldots,x_n) = |\det( \phi_i(x_j) )_{1 \leq i,j \leq n}|^2.
\end{equation}

This leads to the following consequence:

\begin{proposition}[Exclusion of an interval]\label{exclude}  Let $\Sigma$ be a determinantal process associated to the orthogonal projection $P_V$ to an $n$-dimensional subspace $V$ of $L^2(\R)$.  Let $I$ be a compact interval, and suppose that no non-trivial element of $V$ is supported in $I$.  Then the event $E := ( \#(\Sigma \cap I) = 0 )$ occurs with non-zero probability, and upon conditioning to this event $E$, the resulting random variable $(\Sigma|E)$ is a determinantal point process associated to the orthogonal projection $P_{1_{I^c} V}$ to the $n$-dimensional subspace $1_{I^c} V$ of $L^2(\R)$.
\end{proposition}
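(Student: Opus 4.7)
The plan is to work with the explicit Slater-determinant formula \eqref{rhodi} for the $n$-point correlation function of a finite-rank projection process, exhibit the conditional density under $E$ as a renormalisation of the analogous formula for the subspace $1_{I^c}V$, and then use the remark following \eqref{gaudin-lemma} to propagate the match from $\rho_n$ down to every $\rho_k$.

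First, the hypothesis that no nonzero element of $V$ is supported in $I$ says precisely that the map $f \mapsto 1_{I^c}f$ is injective on $V$, so $1_{I^c}V$ is genuinely $n$-dimensional and $P_{1_{I^c}V}$ is a well-defined rank-$n$ projection. The same hypothesis yields $\P(E) > 0$: by Lemma \ref{dpp} we have $\P(E) = \det(1 - 1_I P_V 1_I)$, which vanishes only if $1_I P_V 1_I$ has an eigenvalue equal to $1$, and such an eigenvector $f$ necessarily satisfies $f = 1_I f$ (as the range of $1_I P_V 1_I$ consists of functions supported in $I$). Taking inner products in $(1_I P_V 1_I) f = f$ then gives $\|P_V f\|^2 = \langle P_V f, 1_I f\rangle = \langle f,f\rangle = \|f\|^2$; since $\|P_V f\| \leq \|f\|$ with equality only if $f \in V$, we conclude that $f$ is a nonzero element of $V$ supported in $I$, contradicting the hypothesis.

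Next, fix orthonormal bases $\phi_1,\ldots,\phi_n$ of $V$ and $\psi_1,\ldots,\psi_n$ of $1_{I^c}V$. Each $1_{I^c}\phi_i$ lies in $1_{I^c}V$, so $1_{I^c}\phi_i = \sum_j A_{ij} \psi_j$ for some $n \times n$ matrix $A$, and multilinearity of the determinant gives $\det((1_{I^c}\phi_i)(x_j)) = \det(A)\,\det(\psi_i(x_j))$. Substituting into \eqref{rhodi},
$$1_{(I^c)^n}(x_1,\ldots,x_n)\,\rho_n(x_1,\ldots,x_n) = |\det A|^2\,|\det(\psi_i(x_j))_{i,j}|^2.$$
Because $\Sigma$ has exactly $n$ points almost surely, $\P(E) = \frac{1}{n!}\int_{(I^c)^n}\rho_n$; the $\psi_i$ are supported in $I^c$ and orthonormal, so the Gram-determinant identity $\int_{\R^n}|\det(\psi_i(x_j))|^2\,dx = n!\,\det(\langle\psi_i,\psi_j\rangle) = n!$ gives $\P(E) = |\det A|^2$, consistent with step one. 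Dividing the displayed identity through by $\P(E)$ shows that the conditional $n$-point correlation function is exactly $|\det(\psi_i(x_j))|^2 = \det(K'(x_i,x_j))_{i,j}$, where $K'(x,y) := \sum_i \psi_i(x)\overline{\psi_i(y)}$ is the kernel of $P_{1_{I^c}V}$.

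Finally, both $(\Sigma|E)$ and the candidate determinantal process attached to $P_{1_{I^c}V}$ are almost surely simple with cardinality exactly $n$, so by the remark following \eqref{gaudin-lemma} their lower-order correlation functions are determined from $\rho_n$ by iterated integration. The equality of the $n$-point correlations established above therefore propagates to all $\rho_k$, identifying $(\Sigma|E)$ as the claimed process. The most delicate point is the constant-matching in the third paragraph: verifying that the Jacobian $|\det A|^2$ from the change of basis $\{\phi_i\} \to \{\psi_i\}$ coincides exactly with the normalisation $\P(E)$. The cleanest route, as above, is to read this off from the total-mass requirement $\int_{\R^n}\rho_n^{(E)} = n!$, sidestepping any explicit computation of $\det A$ in terms of the Gram matrix of $\{1_{I^c}\phi_i\}$.
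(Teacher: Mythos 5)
Your proposal is correct and follows essentially the same route as the paper: express the conditional $n$-point function as the restriction of $\rho_n$ to $(I^c)^n$, rewrite the Slater determinant \eqref{rhodi} in terms of an orthonormal basis of $1_{I^c}V$ via a change of basis, and fix the normalising constant by the requirement that the $n$-point correlation function of an $n$-point process integrates to $n!$. Your explicit verification that $|\det A|^2 = \P(E)$ via the Gram/Andreief identity, and your spelled-out argument that an eigenvalue-$1$ eigenvector of $1_I P_V 1_I$ would be a nonzero element of $V$ supported in $I$, are just more detailed versions of steps the paper treats implicitly.
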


\begin{proof}  This is a continuous variant of \cite[Proposition 6.3]{lyons}, and can be proven as follows.  By construction, $P_V$ has no eigenvector of eigenvalue $1$ supported in $I$, and so $\P(E) = \det(1  - 1_I P_V )$ is non-zero.  The point process $(\Sigma|E)$ clearly has cardinality $n$ almost surely, and is thus described by its $n$-point correlation function, which is a constant multiple of
$$ \rho_n(x_1,\ldots,x_n) 1_{I^c}(x_1) \ldots 1_{I^c}(x_n),$$
which by \eqref{rhodi} can be written as
\begin{equation}\label{dr}
 |\det( \phi_i 1_{I^c}(x_j) )_{1 \leq i,j \leq n}|^2,
\end{equation}
where $\phi_1,\ldots,\phi_n$ is an orthonormal basis for $V$.

By hypothesis on $V$, $\phi_1 1_{I^c},\ldots,\phi_n 1_{I^c}$ is a (not necessarily orthonormal) basis for $1_{I^c} V$.  By row operations, we can thus write \eqref{dr} as a constant multiple of
$$  |\det( \phi'_i(x_j) )_{1 \leq i,j \leq n}|^2,$$
where $\phi'_1,\ldots,\phi'_n$ is an orthonormal basis for $1_{I^c} V$.  But this is the $n$-point correlation function for the determinantal point process of $P_{1_{I^c} V}$.  As the $n$-point correlation function of an $n$-point process integrates to $n!$ (cf. \eqref{gaudin-lemma}), we see that the $n$-point correlation function of $(\Sigma|E)$ must be exactly equal to that of the determinantal point process of $P_{1_{I^c} V}$, as claimed.
\end{proof}

It is likely that the above proposition can be extended to infinite-dimensional projections (possibly after imposing some additional regularity hypotheses), but we will not pursue this matter here.

We saw in Lemma \ref{dpp} that if $\Sigma$ is a determinantal point process and $I$ is a compact interval, then the random variable $\# (\Sigma\cap I)$ is the sum of independent Bernoulli random variables.  If the variance of this sum is large, then such a sum should converge to a gaussian, by the central limit theorem.  Examples of such central limit theorems for $\# (\Sigma \cap I)$ were formalised in \cite{CLe}, \cite{sos-gauss}.  We will need a slight variant of these theorems, which gives uniform convergence on the probability density function of $\# (\Sigma \cap I)$ as opposed to the probability distribution function.

\begin{lemma}[Discrete density version of central limit theorem]\label{dbe}  Let $X = \xi_1 + \ldots + \xi_n$ be the sum of independent Bernoulli random variables $\xi_1,\ldots,\xi_n$, with mean $\E X = \mu$ and variance $\Var X = \sigma^2$ for some $\sigma > 0$.  Then for any integer $m$, one has
$$ \P( X = m ) = \frac{1}{\sqrt{2\pi} \sigma} e^{-(m-\mu)^2/2\sigma^2} + O( \sigma^{-1.7} ).$$
\end{lemma}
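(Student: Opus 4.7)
The plan is to proceed via Fourier inversion, which is the natural tool for local central limit theorems on integer lattices. Since $X$ is integer-valued, its characteristic function $\phi_X(t) = \prod_j (1 - p_j + p_j e^{\sqrt{-1} t})$ (with $p_j := \P(\xi_j = 1)$) is $2\pi$-periodic and we have
\begin{equation*}
 \P(X = m) = \frac{1}{2\pi} \int_{-\pi}^{\pi} e^{-\sqrt{-1} t m} \phi_X(t)\, dt.
\end{equation*}
The Gaussian target $\frac{1}{\sqrt{2\pi}\sigma} e^{-(m-\mu)^2/2\sigma^2}$ is similarly the Fourier inversion of $\psi(t) := e^{\sqrt{-1} t\mu - \sigma^2 t^2/2}$ over all of $\R$. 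Subtracting yields
\begin{equation*}
 \P(X=m) - \frac{1}{\sqrt{2\pi}\sigma}e^{-(m-\mu)^2/2\sigma^2}
 \;=\; \frac{1}{2\pi}\int_{-\pi}^{\pi} e^{-\sqrt{-1} t m}[\phi_X(t) - \psi(t)]\,dt
 \;-\; \frac{1}{2\pi}\int_{|t|>\pi} e^{-\sqrt{-1} t m}\psi(t)\,dt,
\end{equation*}
and the task is to bound the right-hand side by $O(\sigma^{-1.7})$.

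First I would observe that we may assume $\sigma$ is at least a large absolute constant: if $\sigma = O(1)$, then $\sigma^{-1.7} \gg 1$, and both $\P(X=m) \le 1$ and $\frac{1}{\sqrt{2\pi}\sigma}e^{-(m-\mu)^2/2\sigma^2} \ll \sigma^{-1} \leq \sigma^{-1.7}$, making the bound trivial. The tail integral $\int_{|t|>\pi} e^{-\sigma^2 t^2/2}\,dt$ is $O(e^{-\sigma^2/2})$ and is negligible.

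The main estimate is on the truncated integral $\int_{-\pi}^{\pi} |\phi_X(t) - \psi(t)|\,dt$. I would split at some threshold $|t| = \delta$. Near zero, a Taylor expansion of $\log(1 - p_j + p_j e^{\sqrt{-1} t})$ gives
\begin{equation*}
 \log \phi_X(t) = \sqrt{-1} t \mu - \frac{\sigma^2 t^2}{2} + R(t), \qquad |R(t)| \ll \sigma^2 |t|^3
\end{equation*}
uniformly for $|t| \leq 1$ (the cubic sum is controlled since $\sum_j p_j(1-p_j) = \sigma^2$, and individual Bernoulli moments are bounded by variance). For $|t|$ small enough that $|R(t)| \leq 1$, this yields $|\phi_X(t) - \psi(t)| \ll \sigma^2 |t|^3 e^{-\sigma^2 t^2/2}$, and integrating against all of $\R$ produces $O(\sigma^{-2})$. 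Away from zero, I would use the classical identity $|1 - p_j + p_j e^{\sqrt{-1} t}|^2 = 1 - 2p_j(1-p_j)(1-\cos t)$, which gives
\begin{equation*}
 |\phi_X(t)| \leq \exp(-\sigma^2 (1-\cos t)) \leq \exp(-c \sigma^2 t^2) \qquad (|t| \leq \pi),
\end{equation*}
matching the decay of $|\psi(t)|$. Choosing $\delta$ to be a small absolute constant, the integral over $\delta \leq |t| \leq \pi$ is bounded by $O(e^{-c\sigma^2})$, which is dwarfed by $\sigma^{-2}$.

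Combining these pieces yields a total error of $O(\sigma^{-2})$, comfortably stronger than the required $O(\sigma^{-1.7})$. There is no real obstacle; the only point requiring any care is verifying that the cubic remainder in the Taylor expansion is genuinely controlled by $\sigma^2 |t|^3$ (rather than by the total number $n$ of summands, which could be much larger than $\sigma^2$ when many $p_j$ are close to $0$ or $1$). This works because each Bernoulli's contribution to the cubic term is bounded by $p_j(1-p_j)$ up to a constant, so the sum is controlled by the variance $\sigma^2$ itself.
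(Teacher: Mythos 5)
Your proposal is essentially the paper's own argument: Fourier inversion of the characteristic function, a Taylor expansion of $\log \phi_X$ near the origin whose cubic remainder is controlled by $\sum_j p_j(1-p_j)=\sigma^2$ rather than by $n$ (the point you correctly single out as the one needing care), and exponential decay of $|\phi_X(t)|$ away from the origin. Your bookkeeping is slightly sharper — you keep the factor $\sigma^2|t|^3$ inside the integral instead of bounding it uniformly, which yields $O(\sigma^{-2})$; this is exactly the improvement the paper notes is available ``with a little more effort'' (the paper instead truncates at $|t|\leq \sigma^{-0.9}$, where $\sigma^2|t|^3\leq\sigma^{-0.7}$, and settles for $O(\sigma^{-1.7})$).

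One small mismatch in your decomposition: the linearization $|e^{R(t)}-1|\ll|R(t)|$ requires $|R(t)|\leq 1$, i.e.\ $|t|\ll\sigma^{-2/3}$, whereas your outer estimate only kicks in at an absolute constant $\delta$, so the range $\sigma^{-2/3}\ll|t|\leq\delta$ is not covered as written. This is immediately repaired with a tool you already have: on that middle range bound $|\phi_X(t)-\psi(t)|\leq|\phi_X(t)|+|\psi(t)|\leq 2e^{-c\sigma^2t^2}$ and integrate, which contributes $O(\sigma^{-1}e^{-c\sigma^{2/3}})$ and is negligible. Equivalently, just move the split point to $\sigma^{-2/3}$ (or, as the paper does, to $\sigma^{-0.9}$).
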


One can improve the error term here to $O(\sigma^{-2})$ with a little more effort, but any error better than $1/\sigma$ will suffice for our purposes.

\begin{proof}  We may assume that $\sigma$ is larger than any given absolute constant, as the claim is trivial otherwise.  We use the Fourier-analytic method.  Write $p_i := \E \xi_i$, then
$$ \mu = \sum_{i=1}^n p_i$$
and
\begin{equation}\label{sigma-form}
 \sigma^2 = \sum_{i=1}^n p_i (1-p_i).
\end{equation}
Observe that $X$ has characteristic function
$$ \E e^{2\pi \sqrt{-1} tX} =\prod_{i=1}^n ((1-p_i) + p_i e^{2\pi \sqrt{-1} t})$$
and so
$$ \P( X = m ) = \int_{-1/2}^{1/2} \prod_{i=1}^n ((1-p_i) e^{-2\pi \sqrt{-1} p_i t} + p_i e^{2\pi (1-p_i) \sqrt{-1} t}) e^{-2\pi \sqrt{-1} (m-\mu) t}\ dt.$$
We can rewrite this integral slightly as $A + B$, where
$$ A := \int_{|t| \leq \sigma^{-0.9}} \prod_{i=1}^n ((1-p_i) e^{-2\pi \sqrt{-1} p_i t} + p_i e^{2\pi \sqrt{-1} (1-p_i) t}) e^{-2\pi \sqrt{-1} (m-\mu) t}\ dt$$
and
$$ B := \int_{\sigma^{-0.9} < t \leq 1/2} \prod_{i=1}^n ((1-p_i) e^{-2\pi \sqrt{-1} p_i t} + p_i e^{2\pi \sqrt{-1} (1-p_i) t}) e^{-2\pi \sqrt{-1} (m-\mu) t}\ dt.$$

We first control $A$.  From Taylor expansion one has
$$ (1-p_i) e^{-2\pi \sqrt{-1} p_i t} + p_i e^{2\pi \sqrt{-1} (1-p_i) t} = \exp( - 2 \pi^2 p_i (1-p_i) t^2 + O( p_i (1-p_i) |t|^3 ) ) $$
in this regime, and so by \eqref{sigma-form}
$$ \prod_{i=1}^n (1-p_i) e^{-2\pi \sqrt{-1} p_i t} + p_i e^{2\pi \sqrt{-1} (1-p_i) t} = \exp( - 2 \pi^2 \sigma^2 t^2 + O( \sigma^{-0.7} ) ).$$
We therefore have
$$ A = \int_{-\sigma^{-0.9}}^{\sigma^{-0.9}} (1 + O(\sigma^{-0.7})) e^{-2\pi^2 \sigma^2 t^2} e^{-2\pi \sqrt{-1} (m-\mu) t}\ dt.$$
Since
$$ \int_\R e^{-2\pi^2 \sigma^2 t^2}\ dt = \frac{1}{\sqrt{2\pi} \sigma} e^{-(m-\mu)^2/2\sigma^2} $$
and
$$ \int_\R e^{-2\pi^2 \sigma^2 t^2} e^{-2\pi \sqrt{-1} (m-\mu) t}\ dt = 
\frac{1}{\sqrt{2\pi} \sigma} e^{-(m-\mu)^2/2\sigma^2} $$
and
$$ \int_{|t| \geq \sigma^{-0.9}} e^{-2\pi^2 \sigma^2 t^2}\ dt = O( \sigma^{-100} )$$
(say), we conclude that
\begin{equation}\label{A-form}
A =  \frac{1}{\sqrt{2\pi} \sigma} e^{-(m-\mu)^2/2\sigma^2} + O( \sigma^{-1.7} ).
\end{equation}

Now we control $B$.  Elementary computation shows that
$$ |(1-p_i) e^{-2\pi \sqrt{-1} p_i t} + p_i e^{2\pi \sqrt{-1} (1-p_i) t}| \leq \exp( - c p_i(1-p_i) t^2 )$$
in this regime for some absolute constant $c>0$.By \eqref{sigma-form}, we may thus bound
$$ |B| \leq \int_{|t| \geq \sigma^{-0.9}} e^{-c \sigma^2 t^2}\ dt$$
and so $B = O(\sigma^{-1.7})$.  Combining this with \eqref{A-form}, the claim follows.
\end{proof}

Combining Lemma \ref{dpp}, Proposition \ref{exclude}, and Lemma \ref{dbe} we immediately obtain

\begin{corollary}\label{core}  Let $\Sigma$ be a determinantal point process on $\R$ whose kernel $P$ is an orthogonal projection to an $n$-dimensional subspave $V$ of $L^2(\R)$. Let $I$ be a compact interval, and $m$ be an integer.  Then
$$ \P( \# (\Sigma \cap I) = m ) = \frac{1}{\sqrt{2\pi} \sigma} e^{-(m-\mu)^2/2\sigma^2} + O( \sigma^{-1.7} ).$$
where
$$ \mu := \tr(1_I P 1_I) $$
and
$$ \sigma^2 := \tr( (1 - 1_I P 1_I) (1_I P 1_I) ) .$$
Furthermore, if $J$ is another compact interval disjoint from $J$, such that no non-trivial element of $V$ is supported on $J$, then
$$ \P( \# (\Sigma \cap I) = m | \# (\Sigma \cap J) = 0 ) = \frac{1}{\sqrt{2\pi} \tilde \sigma} e^{-(m-\tilde \mu)^2/2\tilde \sigma^2} + O( \tilde \sigma^{-1.7} ),$$
where
$$ \tilde \mu := \tr(\tilde P 1_I)$$
and
$$ \tilde \sigma^2 := \tr( \tilde P 1_{I^c} \tilde P 1_I ),$$
and $\tilde P$ is the orthogonal projection to $1_{J^c} V$.
\end{corollary}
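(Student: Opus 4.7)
The proof is essentially a direct synthesis of Lemma \ref{dpp}, Proposition \ref{exclude}, and Lemma \ref{dbe}, so my plan is to assemble these three ingredients in the obvious way and then carry out a short algebraic rewriting of the resulting trace expressions.

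To establish the first (unconditional) assertion, I would apply Lemma \ref{dpp} to the determinantal process $\Sigma$ and the compact interval $I$: this realises $\#(\Sigma \cap I)$ in distribution as a sum $\sum_i \xi_i$ of jointly independent Bernoulli variables, where the success probabilities $\lambda_i \in (0,1]$ are the nonzero eigenvalues of the compact self-adjoint operator $1_I P 1_I$. One then computes the mean as $\sum_i \lambda_i = \tr(1_I P 1_I) = \mu$ and the variance as
$$\sum_i \lambda_i (1-\lambda_i) = \tr(1_I P 1_I) - \tr((1_I P 1_I)^2) = \tr((1 - 1_I P 1_I)(1_I P 1_I)) = \sigma^2,$$
and the desired Gaussian approximation follows immediately from Lemma \ref{dbe} (in the degenerate case $\sigma = 0$ the claimed bound is trivially true since $\sigma^{-1.7}$ then dominates the right-hand side).

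For the conditional assertion, I would first apply Proposition \ref{exclude} to the interval $J$ and the projection $P$: the hypothesis that no nontrivial element of $V$ is supported on $J$ ensures both that $\P(\#(\Sigma\cap J)=0) > 0$ and that the conditional process $(\Sigma \mid \#(\Sigma\cap J)=0)$ is itself a determinantal point process, with kernel the orthogonal projection $\tilde P$ onto the $n$-dimensional subspace $1_{J^c} V$ of $L^2(\R)$. Applying the first part of the Corollary to this new determinantal process and the interval $I$ then yields a Gaussian approximation with mean $\tr(1_I \tilde P 1_I)$ and variance $\tr((1 - 1_I \tilde P 1_I)(1_I \tilde P 1_I))$. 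It remains to rewrite these two quantities in the forms $\tilde \mu$ and $\tilde \sigma^2$ stated in the Corollary, using the cyclic property of trace, the identity $1_I^2 = 1_I$, and the fact that $\tilde P$ is an orthogonal projection (so $\tilde P^2 = \tilde P$). The mean reduces trivially:
$$\tr(1_I \tilde P 1_I) = \tr(\tilde P 1_I^2) = \tr(\tilde P 1_I) = \tilde \mu,$$
while for the variance,
$$\tr((1-1_I \tilde P 1_I)(1_I \tilde P 1_I)) = \tr(\tilde P 1_I) - \tr(\tilde P 1_I \tilde P 1_I) = \tr(\tilde P^2 1_I) - \tr(\tilde P 1_I \tilde P 1_I) = \tr(\tilde P (1 - 1_I) \tilde P 1_I) = \tr(\tilde P 1_{I^c} \tilde P 1_I) = \tilde \sigma^2.$$

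There is no substantive obstacle here: the Corollary is purely a packaging statement, with each of its two parts reducing, via the earlier-established Proposition \ref{exclude}, to the sum-of-Bernoullis representation of Lemma \ref{dpp} followed by the local central limit estimate of Lemma \ref{dbe}. The only mild subtlety is the trace-algebra rewriting above, which requires $\tilde P$ to be genuinely an orthogonal projection — precisely what Proposition \ref{exclude} delivers.
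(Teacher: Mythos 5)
Your proposal is correct and matches the paper's intended argument exactly: the paper gives no written proof, stating only that the corollary follows immediately by combining Lemma \ref{dpp}, Proposition \ref{exclude}, and Lemma \ref{dbe}, which is precisely the assembly you carry out (including the correct trace-algebra rewriting of the conditional mean and variance).
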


Let the notation be as in the above corollary.  In our application, we will need to determine the extent to which events such as $\# (\Sigma \cap I) = m$ and $\# (\Sigma \cap J) = 0$ are independent.  In view of Corollary \ref{core}, it is then natural to determine the extent to which the projection $\tilde P$ differs from that of $P$.

Observe that as no non-trivial element of $V$ is supported on $J$, the operator $P 1_{J^c} P$, viewed as a map from $V$ to $V$, is invertible.  Denoting its inverse on $V$ by $(P 1_{J^c} P)_V^{-1}$, we then see that the operator $1_{J^c} P (P 1_{J^c} P)_V^{-1} P 1_{J^c}$ is self-adjoint, idempotent, and has $1_{J^c} V$ as its range, and so must be equal to $\tilde P$:
\begin{equation}\label{pip-pop}
\tilde P := 1_{J^c} P (P 1_{J^c} P)_V^{-1} P 1_{J^c}.
\end{equation}
We can also write $(P 1_{J^c} P)_V^{-1}$ as $(1 - P 1_J P)_V^{-1}$ (since $P$ is the identity on $V$).  Thus, by Neumann series, we formally have the expansion
$$ \tilde P= 1_{J^c} P 1_{J^c} + 1_{J^c} P 1_J P 1_{J^c} + 1_{J^c} P 1_J P 1_J P 1_{J^c} + \ldots$$
This expansion is convergent for sufficiently small $J$, but does not necessarily converge for $J$ large.  However, in practice we will be able to invert $1-P1_J P$ by a perturbation argument involving the Fredholm alternative.  More precisely, in our application, the finite rank projection $P$ will be ``close'' in some weak sense to an infinite rank projection $P_0$ (in our application, $P_0$ will be the Dyson projection $P_\Dyson$), projecting to some infinite-dimensional Hilbert space $V_0$.  We will assume $P_0$ to be locally trace class, so that $P_0 1_J P_0$ is compact.  If we assume that no non-trivial element of $V_0$ is supported on $J$, then the Fredholm alternative (see e.g. \cite[Theorem VI.14]{rs}) then implies that $1 - P_0 1_J P_0: V_0 \to V_0$ is invertible, with inverse $(1 - P_0 1_J P_0)_{V_0}^{-1} = 1 + K_0$ for some compact operator $K_0$, thus
\begin{equation}\label{pok}
 (1 + K_0) (1-P_0 1_J P_0)  = 1
\end{equation}
on $L^2(\R)$.  As $1-P_0 1_J P_0$ is self-adjoint and is the identity on $V_0$, we see that $K_0$ has range in $V_0$ and cokernel in $V_0^\perp$, thus $$K_0= P_0 K_0 = K_0 P_0 = P_0 K_0 P_0.$$
One then expects $1 + PK_0P: V \to V$ to be an approximate inverse to $1-P1_J P$.  Indeed, we have
\begin{equation}\label{polio}
 (1+PK_0P) (1-P1_J P) = 1 + E
\end{equation}
where
$$ E := P K_0 P - P (1+K_0) P 1_J P.$$
Meanwhile, from \eqref{pok} we have
\begin{equation}\label{kooper}
 K_0 = (1+K_0) P_0 1_J P_0
\end{equation}
and thus
$$ E = P (1+K_0) (P_0 1_J P_0 - P 1_J P) P.$$
Let us now bound some norms of $E$.  As the projection operator $P$ has an operator norm of at most $1$, one has
$$ \|E\|_{\op} \leq (1 + \|K_0\|_{\op}) \| P_0 1_J P_0 - P 1_J P \|_{\op};$$
splitting $P_0 1_J P_0 - P 1_J P$ as $(P_0-P) 1_J P_0 - P 1_J (P_0 - P)$ we conclude that
$$ \|E\|_{\op} \leq (1 + \|K_0\|_{\op}) ( \| (P_0-P) 1_J P_0 \|_{\op} + \| (P_0-P) 1_J P \|_{\op} ).$$
If we now make the hypothesis that
\begin{equation}\label{hypot}
\| (P_0-P) 1_J \|_{\op} \leq \frac{1}{4(1+\|K_0\|_{\op})}
\end{equation}
then we have $\|E\|_{\op} \leq 1/2$, and so we have the Neumann series
$$ (1+E)^{-1} = 1 - E + E^2 - \ldots.$$
In particular,
$$ \| (1+E)^{-1} - 1 \|_{S^1} \leq 2 \|E\|_{S^1}.$$
To bound the right-hand side, we use the triangle inequality to obtain
$$ \|E\|_{S^1} \leq (1 + \|K_0\|_{\op}) (\|P_0 1_J P_0 \|_{S^1} + \|P 1_J P \|_{S^1}).$$
Factorising $P_0 1_J P_0  = (1_J P_0)^* (1_J P_0)$ and similarly for $P 1_J P$, we conclude that
$$ \| (1+E)^{-1} - 1 \|_{S^1} \leq 2 (1 + \|K_0\|_{\op}) (\|1_J P_0 \|_{HS}^2 + \|1_J P \|_{HS}^2).$$

Note that $E$ maps $V$ to itself, and so $(1+E)^{-1}$ can also be viewed as an operator from $V$ to itself (being the identity on $V^\perp$).  From \eqref{polio} one then has
$$ (P 1_{J^c} P)_V^{-1} = (1+E)^{-1} (1+PK_0P)$$
and thus
$$\| (P 1_{J^c} P)_V^{-1} - (1+PK_0P) \|_{S^1} \leq
2 (1 + \|K_0\|_{\op})^2 (\|1_J P_0 \|_{HS}^2 + \|1_J P \|_{HS}^2).$$
Applying \eqref{pip-pop}, we conclude that
$$ \| \tilde P - 1_{J^c} P (1+PK_0P) P 1_{J^c} \|_{S^1} \leq
2 (1 + \|K_0\|_{\op})^2 (\|1_J P_0 \|_{HS}^2 + \|1_J P \|_{HS}^2).$$
To deal with the $PK_0P$ term we observe from \eqref{kooper} and the factorisation $P_0 1_J P_0  = (1_J P_0)^* (1_J P_0)$
that
$$ \|K_0\|_{S^1} \leq (1+\|K_0\|_{\op}) \| 1_J P_0\|_{HS}^2,$$
and so
$$ \| \tilde P - 1_{J^c} P 1_{J^c} \|_{S^1} \leq
3 (1 + \|K_0\|_{\op})^2 (\|1_J P_0 \|_{HS}^2 + \|1_J P \|_{HS}^2).$$

We summarise the above discussion as a proposition:

\begin{proposition}[Approximate description of $\tilde P$]\label{app}  Let $P$ be a projection to an $n$-dimensional subspace $V$ of $L^2(\R)$, and let $J$ be a compact interval such that no non-trivial element of $V$ is supported on $J$.  Let $P_0$ be a projection to a (possibly infinite-dimensional) subspace $V_0$ of $L^2(\R)$ which is locally trace class, and such that no non-trivial element of $V_0$ is supported on $J$.  Let $K_0: L^2(\R) \to L^2(\R)$ be the compact operator solving \eqref{pok} that is provided by the Fredholm alternative.  Suppose that
\begin{equation}\label{hyp}
\| (P_0-P) 1_J \|_{\op} \leq \frac{1}{4 (1+\|K_0\|_{\op})}.
\end{equation}
Let $\tilde P$ be the orthogonal projection to $1_{J^c} V$.  Then
$$ \| \tilde P - 1_{J^c} P 1_{J^c} \|_{S^1} \leq 3 (1 + \|K_0\|_{\op})^2 (\|1_J P_0 \|_{HS}^2 + \|1_J P \|_{HS}^2).$$
\end{proposition}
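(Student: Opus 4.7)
The plan is to write $\tilde P$ explicitly in terms of $P$, then approximate the resulting inverse operator by exploiting the Fredholm inverse available for the reference projection $P_0$. Since $\tilde P$ is the orthogonal projection onto $1_{J^c} V$, and the hypothesis ``no non-trivial element of $V$ is supported on $J$'' makes $P 1_{J^c} P : V \to V$ invertible, I would first observe that the operator $1_{J^c} P (P 1_{J^c} P)_V^{-1} P 1_{J^c}$ is self-adjoint, idempotent, and has range exactly $1_{J^c} V$; by uniqueness of orthogonal projections it must equal $\tilde P$. Writing $P 1_{J^c} P = P - P 1_J P$ and remembering that $P$ is the identity on $V$, the entire task reduces to inverting $1 - P 1_J P$ on $V$.

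To invert $1 - P 1_J P$, I would borrow structure from $P_0$. The Fredholm alternative applied to $1 - P_0 1_J P_0$ on $V_0$ produces a compact $K_0$ with $(1+K_0)(1 - P_0 1_J P_0) = 1$ and $K_0 = P_0 K_0 P_0$, and I take $1 + P K_0 P$ as an approximate inverse for $1 - P 1_J P$. A direct computation, together with the identity $K_0 = (1+K_0) P_0 1_J P_0$, yields
\[
(1 + P K_0 P)(1 - P 1_J P) = 1 + E, \qquad E = P(1+K_0)(P_0 1_J P_0 - P 1_J P)P.
\]
Splitting the middle factor as $(P_0 - P)1_J P_0 - P 1_J (P_0 - P)$ and invoking the operator norm hypothesis \eqref{hyp} bounds $\|E\|_{\op} \leq 1/2$, so $(1+E)^{-1}$ converges in Neumann series and $\|(1+E)^{-1} - 1\|_{S^1} \leq 2 \|E\|_{S^1}$.

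Next, I would translate this into $S^1$ estimates using the factorisations $P 1_J P = (1_J P)^*(1_J P)$ and $P_0 1_J P_0 = (1_J P_0)^*(1_J P_0)$, which convert nuclear norms of these operators into the squares $\|1_J P\|_{HS}^2$ and $\|1_J P_0\|_{HS}^2$. Combined with $(P 1_{J^c} P)_V^{-1} = (1+E)^{-1}(1 + P K_0 P)$ this gives
\[
\| \tilde P - 1_{J^c} P (1 + P K_0 P) P 1_{J^c} \|_{S^1} \;\lesssim\; (1+\|K_0\|_{\op})^2 \bigl( \|1_J P_0\|_{HS}^2 + \|1_J P\|_{HS}^2 \bigr).
\]
The same factorisation trick applied to $K_0 = (1+K_0)P_0 1_J P_0$ bounds $\|K_0\|_{S^1} \leq (1+\|K_0\|_{\op})\|1_J P_0\|_{HS}^2$, which allows one to absorb the $P K_0 P$ term into $1_{J^c} P 1_{J^c}$ and produces the stated constant of $3$.

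The conceptual obstacle is that $P$, being finite rank, does not itself support a Fredholm inversion of $1 - P 1_J P$ on $V$ that comes with good compactness and nuclear estimates uniform in $n$; the device of bootstrapping from the infinite-rank reference $P_0$ via the perturbative identity for $E$ is what makes the argument work. The technical obstacle is that the hypothesis \eqref{hyp} only gives operator-norm control on $(P_0 - P)1_J$, whereas the conclusion requires $S^1$ control; this forces one to route the estimate through the Hilbert--Schmidt--Hilbert--Schmidt version of the non-commutative H\"older inequality, which is why the right-hand side comes out quadratic in the Hilbert--Schmidt norms rather than linear.
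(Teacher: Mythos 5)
Your proposal reproduces the paper's own argument essentially step for step: the identification $\tilde P = 1_{J^c} P (P 1_{J^c} P)_V^{-1} P 1_{J^c}$, the approximate inverse $1 + P K_0 P$ with error operator $E = P(1+K_0)(P_0 1_J P_0 - P 1_J P)P$, the Neumann series under the operator-norm hypothesis \eqref{hyp}, and the factorisations $P 1_J P = (1_J P)^*(1_J P)$ converting nuclear norms into squared Hilbert--Schmidt norms (including the final absorption of $P K_0 P$ giving the constant $3$) are exactly the paper's steps. The argument is correct and takes the same route as the paper.
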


Because the $S^1$ norm controls the trace, this proposition allows us to compare the quantities $\tilde \mu,\tilde \sigma^2$ from Corollary \ref{core} with their counterparts $\mu,\sigma^2$:

\begin{corollary}\label{closo}  Let $n, P, V, J, P_0, K_0$ be as in Proposition \ref{app} (in particular, we make the hypothesis \eqref{hyp}).  Let $I$ be a compact interval disjoint from $J$, and let $\mu,\sigma^2,\tilde \mu,\tilde \sigma^2$ be as in Corollary \ref{core}.  Then we have
$$ \tilde \mu = \mu + O( M ) $$
and
$$ \tilde \sigma^2 = \sigma^2 + O(M)$$
where $M$ is the quantity
$$M := (1 + \|K_0\|_{\op})^2 (\|1_J P_0 \|_{HS}^2 + \|1_J P \|_{HS}^2).$$
\end{corollary}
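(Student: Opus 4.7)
The plan is to use Proposition \ref{app}, which provides $\|\tilde P - 1_{J^c} P 1_{J^c}\|_{S^1} \leq 3M$, together with the bound $|\tr A| \leq \|A\|_{S^1}$ and the non-commutative H\"older inequalities recorded in Section \ref{notation-sec}. Writing $\tilde P = 1_{J^c} P 1_{J^c} + E$ with $\|E\|_{S^1} \leq 3M$, I would substitute this decomposition into the formulas $\tilde \mu = \tr(\tilde P 1_I)$ and $\tilde \sigma^2 = \tr(\tilde P 1_{I^c} \tilde P 1_I)$ from Corollary \ref{core}, isolate a main term matching $\mu$ or $\sigma^2$, and control every remaining term by $O(M)$.

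For $\tilde \mu$, the error contribution is $|\tr(E 1_I)| \leq \|E\|_{S^1} \leq 3M$. Since $I$ and $J$ are disjoint we have $I \subset J^c$, hence $1_I 1_{J^c} = 1_{J^c} 1_I = 1_I$; by cyclicity of trace this collapses the main term $\tr(1_{J^c} P 1_{J^c} 1_I)$ to $\tr(P 1_I) = \mu$.

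For $\tilde \sigma^2$, the two cross terms $\tr(E 1_{I^c} \tilde P 1_I)$ and $\tr(1_{J^c} P 1_{J^c} 1_{I^c} E 1_I)$ are each bounded by $3M$: extract the single $E$ factor via the H\"older inequality $\|AB\|_{S^1} \leq \|A\|_{S^1}\|B\|_{\op}$ (or its symmetric form), bounding all remaining factors in operator norm by $1$. The leading term $\tr(1_{J^c} P 1_{J^c} \cdot 1_{I^c} \cdot 1_{J^c} P 1_{J^c} \cdot 1_I)$ simplifies, using the commuting multiplier identities $1_{J^c} 1_{I^c} 1_{J^c} = 1_{J^c \cap I^c}$ and $1_{J^c} 1_I = 1_I$, to $\tr(P 1_{J^c \cap I^c} P 1_I)$. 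Since $J \subset I^c$ gives $1_{I^c} = 1_J + 1_{J^c \cap I^c}$, this equals $\sigma^2 - \tr(P 1_J P 1_I)$. The residual is controlled by noting that $P 1_J P = (1_J P)^*(1_J P)$ is positive, so $\|P 1_J P\|_{S^1} = \tr(P 1_J P) = \|1_J P\|_{HS}^2 \leq M$, whence $|\tr(P 1_J P 1_I)| \leq M$.

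No step is deep; the only real obstacle is the clean bookkeeping of the four indicator functions $1_I, 1_J, 1_{I^c}, 1_{J^c}$ and of which factor in each operator product should absorb the $S^1$-norm while the others are bounded by operator norm. Assembling the estimates above yields $\tilde \mu = \mu + O(M)$ and $\tilde \sigma^2 = \sigma^2 + O(M)$ as claimed.
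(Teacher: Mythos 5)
Your proof is correct and follows exactly the route the paper intends (the paper leaves the corollary as an immediate consequence of Proposition \ref{app} via the remark that the $S^1$ norm controls the trace): substitute $\tilde P = 1_{J^c}P1_{J^c} + E$ with $\|E\|_{S^1}\leq 3M$, use $|\tr A|\leq\|A\|_{S^1}$ and the H\"older inequalities, and absorb the residual $\tr(P1_JP1_I)$ via $\|P1_JP\|_{S^1}=\|1_JP\|_{HS}^2\leq M$. Nothing further is needed.
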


In practice, this corollary will allow us to show that the random variable $\#(\Sigma \cap I)$ is essentially independent of the event $\#(\Sigma \cap J)=0$ for certain determinantal point processes $\Sigma$ and disjoint intervals $I,J$.

\section{Proof of main theorem}

We are now ready to prove Theorem \ref{gue-main}.  We may of course assume that $n$ is larger than any given absolute constant.

Let $n, M_n, \eps, i, u$ be as in Theorem \ref{gue-main}, and let $X$ be the random variable
$$ X := \frac{\lambda_{i+1}(M_n) - \lambda_i(M_n)}{1/(\sqrt{n} \rho_\sc(u))}.$$
Clearly $X$ takes values in $\R^+$ almost surely.  Our task is to show that
$$ \P( X \leq s ) = \int_0^s p(y)\ dy + o(1)$$
for all fixed $s>0$, or equivalently that
\begin{equation}\label{paxis}
 \P( X > s ) = \int_s^\infty p(y)\ dy + o(1)
 \end{equation}
for all fixed $s>0$.

It will suffice to show that
\begin{equation}\label{xis}
 \E (X-s)_+ = \int_s^\infty (y-s) p(y)\ dy + o(1)
\end{equation}
for all fixed $s>0$, since on applying this with two choices $0 < s_1 < s_2$ of $s$, subtracting, and then dividing by $s_2-s_1$ we see that
$$ \E \min( \frac{(X-s_1)_+}{s_2-s_1}, 1) = \int_{s_1}^\infty \min(\frac{y-s_1}{s_2-s_1}, 1) p(y)\ dy + o(1);$$
letting $s_1, s_2$ approach a given value $s$ from the left or right, we then conclude the bounds
$$ \int_{s+\delta}^\infty p(y)\ dy - o(1) \leq \P( X > s ) \leq \int_{s-\delta}^\infty p(y)\ dy + o(1) $$
for any fixed $\delta>0$, and \eqref{paxis} follows from the monotone convergence theorem.

It remains to prove \eqref{xis}.  By \eqref{equiv}, the left-hand side of \eqref{xis} can be written as
$$ \det(1 - 1_{[0,s]} P_\Dyson 1_{[0,s]}) + o(1).$$
Meanwhile, if we introduce the normalised random matrix
$$ \tilde M_n := \frac{M_n - u \sqrt{n}}{1/\sqrt{n} \rho_\sc(u)}$$
then we have
$$ X = \lambda_{i+1}(\tilde M_n) - \lambda_i(\tilde M_n).$$
For any fixed choice of $\tilde M_n$, we observe the identity
$$ (X-s)_+ = \int_\R 1_{N_{(-\infty,x)}(\tilde M_n)=i \wedge N_{[x,x+s]}(\tilde M_n)=0}\ dx,$$
since the set of real numbers $x$ for which $N_{(-\infty,x)}(\tilde M_n)=i \wedge N_{[x,x+s]}(\tilde M_n)=0$ holds is an interval of length $X-s$ when $X>s$, and empty otherwise.  Taking expectations and using the Fubini-Tonelli theorem, we conclude that
$$ \E (X-s)_+ = \int_\R \P( N_{(-\infty,x)}(\tilde M_n)=i \wedge N_{[x,x+s]}(\tilde M_n)=0 )\ dx.$$
Our task is thus to show that
\begin{equation}\label{silly}
\int_\R \P( N_{(-\infty,x)}(\tilde M_n)=i \wedge N_{[x,x+s]}(\tilde M_n)=0 )\ dx 
= \det(1 - 1_{[0,s]} P_\Dyson 1_{[0,s]}) + o(1).
\end{equation}
Let $t_n := \log^{0.6} n$ (say).  We will shortly  establish the following claims:

\begin{enumerate}
\item[(i)] (Tail estimate)  We have
\begin{equation}\label{masx}
\int_{|x| \geq t_n}  \P( N_{(-\infty,x)}(\tilde M_n)=i )\ dx = o(1).
\end{equation}
\item[(ii)] (Approximate independence)  For $|x| < t_n$, one has
\begin{equation}\label{indigo}
\P( N_{(-\infty,x)}(\tilde M_n)=i \wedge N_{[x,x+s]}(\tilde M_n)=0 ) = 
\P( N_{(-\infty,x)}(\tilde M_n)=i ) \P( N_{[x,x+s]}(\tilde M_n)=0 ) + O( \log^{-0.85} n ).
\end{equation}
\item[(iii)]  (Gap probability at fixed energy)  For $|x| < t_n$, one has
\begin{equation}\label{violet}
 \P( N_{[x,x+s](\tilde M_n)=0} ) = \det(1 - 1_{[0,s]} P_\Dyson 1_{[0,s]}) + o(1).
\end{equation}
\item[(iv)]  (Central limit theorem)  For $|x| < t_n$, one has
\begin{equation}\label{paprika}
 \P( N_{(-\infty,x)}(\tilde M_n)=i ) = \frac{1}{\sqrt{2\pi} \sigma} e^{-x^2/2\sigma^2} 
+ O( \log^{-0.85} n )
\end{equation}
where $\sigma := \sqrt{\log n/2\pi^2}$.
\end{enumerate}

Let us assume these estimates for the moment.  From \eqref{indigo}, \eqref{violet}, \eqref{paprika} one has
$$
\P( N_{(-\infty,x)}(\tilde M_n)=i \wedge N_{[x,x+s]}(\tilde M_n)=0 ) = 
\frac{1}{\sqrt{2\pi} \sigma} e^{-x^2/2\sigma^2} (\det(1 - 1_{[0,s]} P_\Dyson 1_{[0,s]}) + o(1)) + 
O( \log^{-0.85} n )$$
for $|x| \leq t_n$.  Since
$$ \int_{|x| \leq t_n} \frac{1}{\sqrt{2\pi} \sigma} e^{-x^2/2\sigma^2} = 1 - o(1)$$
we conclude from the choice of $t_n$ that
$$ \int_{|x| \leq t_n}\P( N_{(-\infty,x)}(\tilde M_n)=i \wedge N_{[x,x+s]}(\tilde M_n)=0 ) = 
\det(1 - 1_{[0,s]} P_\Dyson 1_{[0,s]}) + o(1)$$
and the claim \eqref{silly} then follows from \eqref{masx}.

It remains to establish the estimates \eqref{masx}, \eqref{indigo}, \eqref{violet}, \eqref{paprika}.  We begin with \eqref{masx}.  We can rewrite
$$ N_{(-\infty,x)}(\tilde M_n) = N_{(-\infty, \sqrt{n} u + \frac{x}{\sqrt{n} \rho_\sc(u)})}(M_n).$$
From the rigidity of eigenvalues of GUE (see\footnote{One can also derive this rigidity from the Bennett's inequality argument given below.  One could also use the rigidity results for more general Wigner matrices here, see \cite{EYY2} or \cite{TVcont}, though this would be overkill.} e.g. \cite[Corollary 5]{TVcont}) we know that 
$$ \P( N_{(-\infty,y)}(M_n) = i ) \ll n^{-100}$$
(say) unless
$$ y =\sqrt{n} u + O( \log^{O(1)} n / \sqrt{n} ) .$$
Because of this, to prove \eqref{masx} we may restrict to the regime where $x = O(\log^{O(1)} n)$.

By\footnote{Strictly speaking, Lemma \ref{dpp} is not applicable as stated because $(-\infty,y)$ is not a compact interval, but this can be addressed by the usual truncation argument, replacing $(-\infty,y)$ with $(-M,y)$ and then letting $M$ go to infinity, exploiting the exponential decay of $M_n$.  We omit the routine details.} Lemma \ref{dpp}, for any real number $y$, $N_{(-\infty,y)}(M_n)$ is the sum of $n$ independent Bernoulli variables.  The mean and variance of such random variables was computed in \cite{Gus}.  Indeed, from \cite[Lemma 2.1]{Gus} one has (after adjusting the normalisation)
$$ \E N_{(-\infty,y)}(M_n) = \int_{-\infty}^{y/\sqrt{n}} \rho_\sc(t)\ dt + O( \frac{\log n}{n} )$$
while from \cite[Lemma 2.3]{Gus} one has
$$ \Var N_{(-\infty,y)}(M_n) = (\frac{1}{2\pi^2} + o(1)) \log n.$$
Renormalising (and using the hypothesis $x = O(\log^{O(1)} n)$), we conclude that
$$ \E N_{(-\infty,x)}(\tilde M_n) = i + x + O(1)$$
and
$$ \Var N_{(-\infty,x)}(\tilde M_n) = (\frac{1}{2\pi^2} + o(1)) \log n.$$
Applying Bennet's inequality (see \cite{Ben62}), we conclude that
$$ \P( \E N_{(-\infty,x)}(\tilde M_n) = i ) \ll \exp( - c x / \sqrt{\log n} )$$
for some absolute constant $c>0$, which gives \eqref{masx}.  The bound \eqref{paprika} follows from the same computations, using Lemma \ref{dbe} (or Corollary \ref{core}) in place of Bennet's inequality.

The estimate \eqref{violet} is well known (see\footnote{Strictly speaking, Theorem 3.1.1 of \cite{AGZ} only treats the case $u=0$, but the general case $-2+\eps < u < 2-\eps$ follows from the same methods; see \cite[Exercise 3.7.5]{AGZ}.} e.g. \cite[Theorem 3.1.1]{AGZ}); for future reference we remark that this estimate also implies the crude lower bound
\begin{equation}\label{slog}
 \P( N_{[x,x+s](\tilde M_n)=0} ) \gg 1
\end{equation}
for $n$ sufficiently large.  We therefore turn to \eqref{indigo}.  By \eqref{slog} and \eqref{paprika},  it suffices to establish the conditional probability estimate
\begin{equation}\label{slime}
\P( N_{(-\infty,x)}(\tilde M_n)=i | N_{[x,x+s]}(\tilde M_n)=0 ) = 
\frac{1}{\sqrt{2\pi} \sigma} e^{-x^2/2\sigma^2} + O( \log^{-0.85} n ).
\end{equation}
We now turn to \eqref{slime}.  Recall that the eigenvalues of $M_n$ form a determinantal point process with kernel $K^{(n)}$ given by \eqref{kform}.  Rescaling this, we see that the eigenvalues of $\tilde M_n$ form a determinantal point process with kernel $\tilde K^{(n)}$ given by the formula
$$ \tilde K^{(n)}(x,y) := \frac{1}{\rho_\sc(u) \sqrt{n}} K^{(n)}( u \sqrt{n} + \frac{x}{\rho_\sc(u) \sqrt{n}}, u \sqrt{n} + \frac{y}{\rho_\sc(u) \sqrt{n}} ).$$
This is the kernel of an orthogonal projection $\tilde P^{(n)}$ to some $n$-dimensional subspace $\tilde V^{(n)}$ in $L^2(\R)$.  The elements of this subspace consist of polynomial multiples of a gaussian function, and in particular there is no non-trivial element of $\tilde V^{(n)}$ that vanishes on $[x,x+s]$.  Applying Corollary \ref{core} (and a truncation argument to deal with the non-compact nature of $(-\infty,x)$), one has
$$
\P( N_{(-\infty,x)}(\tilde M_n)=i | N_{[x,x+s]}(\tilde M_n)=0 ) = 
\frac{1}{\sqrt{2\pi} \sigma'} e^{-(i-\mu')^2/2(\sigma')^2} + O( (\sigma')^{-1.7} )$$
where
$$ \mu' := \tr(P' 1_{(-\infty,x)})$$
and
$$ (\sigma')^2 := \tr( P' 1_{(-\infty,x)^c} P' 1_{(-\infty,x)} ),$$
and $P'$ is the orthogonal projection to $1_{[x,x+s]^c} \tilde V^{(n)}$.  To establish \eqref{slime}, it will thus suffice to establish the bounds
$$ \mu' = O(1)$$
and
$$ (\sigma')^2 = \sigma^2 + O(1).$$

To do this, we will use Corollary \ref{closo}, with $J := [x,x+s]$, and the role of $P_0$ being played by the Dyson projection $P_\Dyson$.  From the well-known fact that a non-trivial function and its Fourier transform cannot both be compactly supported, we see that there is no non-trivial function in the range of $P_\Dyson$ supported in $J$.  As $P_\Dyson$ is locally trace class, we conclude from the Fredholm alternative (see e.g. \cite[Theorem VI.14]{rs}) that the compact operator $K_0$ defined by \eqref{pok} exists.  As $K_0$ is independent of $n$, we certainly have\footnote{Note that our bound here on $\|K_0\|_{op}$ is \emph{ineffective}, as it relies on the Fredholm alternative.  However, it is quite probable that one can obtain an effective bound on $K_0$ here by using a quantitative versions of Hardy's uncertainty principle to give a more robust version of the assertion that a non-trivial function and its Fourier transform cannot both be compactly supported. We will not pursue this issue here.}
$$ \|K_0\|_{op} \ll 1$$
and similarly
\begin{equation}\label{ja}
 \| 1_J P_\Dyson \|_{HS} \ll 1.
\end{equation}
By Corollary \ref{closo} (once again using a truncation argument to deal with the half-infinite nature of $(-\infty,x)$), it will thus suffice to show that
\begin{equation}\label{page}
 \| 1_J \tilde P^{(n)} \|_{HS} \ll 1
\end{equation}
and
\begin{equation}\label{pane}
\| (P_\Dyson - \tilde P^{(n)}) 1_J \|_{\op} = o(1).
\end{equation}
Since the Hilbert-Schmidt norm controls the operator norm, we see from \eqref{ja} that \eqref{page}, \eqref{pane} will both follow from the bound
$$
\| (P_\Dyson - \tilde P^{(n)}) 1_J \|_{HS} = o(1).$$
Using the integral kernels $K_\Dyson, \tilde K^{(n)}$ of $P_\Dyson$, $\tilde P^{(n)}$ and the compact nature of $J$, it suffices to show that
\begin{equation}\label{claim}
\int_\R |K_\Dyson(x,y) - \tilde K^{(n)}(x,y)|^2\ dx = o(1)
\end{equation}
uniformly for all $y \in J$.  In principle one could establish this bound from a sufficiently precise analysis of the asymptotics of Hermite polynomials (such as those given in \cite{dy}), but one can actually derive this bound from the standard convergence result \eqref{asym} as follows.  From \eqref{asym} we know that $\tilde K^{(n)}(x,y)$ converges locally uniformly in $x,y$ to $K_\Dyson(x,y)$ as $n \to\infty$, and so
\begin{equation}\label{tri}
 \int_{-L}^L |K_\Dyson(x,y) - \tilde K^{(n)}(x,y)|^2\ dx = o(1)
\end{equation}
for any fixed $L$.  Also, as $P_\Dyson, \tilde P^{(n)}$ are both projections, one has
$$ \int_\R |K_\Dyson(x,y)|^2\ dx = K_\Dyson(y,y)$$
and
$$ \int_\R |\tilde K^{(n)}(x,y)|^2\ dx = \tilde K^{(n)}(y,y).$$
From \eqref{asym}, one has
$$ \tilde K^{(n)}(y,y) = K_\Dyson(y,y) + o(1).$$
For any given $\eps > 0$, one can find an $L$ such that
\begin{equation}\label{lae}
 \int_{|x|>L} |K_\Dyson(x,y)|^2\ dx = O(\eps)
 \end{equation}
and thus
$$ \int_\R |\tilde K^{(n)}(x,y)|^2\ dx = \int_{-L}^L |K_\Dyson(x,y)|^2\ dx + O(\eps) + o(1).$$
But from \eqref{tri} and the triangle inequality we have
$$ \int_{-L}^L |\tilde K^{(n)}(x,y)|^2\ dx = \int_{-L}^L |K_\Dyson(x,y)|^2\ dx + o(1)$$
and so 
$$ \int_{|x|>L} |\tilde K^{(n)}(x,y)|^2\ dx = O(\eps) + o(1).$$
From this, \eqref{tri}, and \eqref{lae} we conclude that
$$ \int_\R |K_\Dyson(x,y) - \tilde K^{(n)}(x,y)|^2\ dx = O(\eps) + o(1)$$
and the claim \eqref{claim} follows by sending $\eps$ to zero.  The proof of Theorem \ref{gue-main} (and thus also Corollary \ref{wigner-main}) is now complete.


\begin{thebibliography}{10}

\bibitem{AGZ} G. Anderson, A. Guionnet and O. Zeitouni, An introduction to random matrices,  Cambridge Studies in Advanced Mathematics, 118. Cambridge University Press, Cambridge, 2010.

\bibitem{bb}
G. Ben Arous, P. Bourgade, Extreme gaps between eigenvalues of random matrices, {\it arXiv:1010.1294}

\bibitem{Ben62} 
G. Bennett, Probability Inequalities for the Sum of Independent Random Variables, \emph{Journal of the American Statistical Association} \textbf{57} (1962), 33-–45.

\bibitem{bhatia}
R. Bhatia, Matrix Analysis, Springer-Verlag, New York 1997.

\bibitem{bourgade}
P. Bourgade, L. Erd{\H o}s, H. T. Yau, Universality of General $\beta$-Ensembles, {\it arXiv:1104.2272}

\bibitem{CLe} O. Costin and  J. Lebowitz, Gaussian fluctuations in random matrices, {\it Phys. Rev. Lett. } 75 (1) (1995) 69--72.

\bibitem{DKMVZ} P. Deift, T. Kriecherbauer, K.T.-R.  McLaughlin,  S. Venakides
and X. Zhou, Uniform asymptotics for polynomials orthogonal with respect to varying exponential weights and applications to universality questions in random matrix theory,
{\it Comm. Pure Appl. Math.}  \textbf{52} (1999), no. 11, 1335--1425.

\bibitem{dy}
B. Delyon, J. Yao, On the spectral distribution of Gaussian random matrices, Acta
Mathematicae Sinica, English Series Vol. 22, No. 2 (2006) 297--312.

\bibitem{dyson}
F. Dyson, The Threefold Way. Algebraic Structure of Symmetry Groups and Ensembles in Quantum Mechanics, \emph{J. Math. Phys.} \textbf{3} (1962), 1199--1215.

\bibitem{EPRSY}
L. Erd\H{o}s, S. Peche, J. Ramirez,
  B. Schlein and H.-T. Yau, Bulk universality for Wigner matrices, {\it arXiv:0905.4176}

\bibitem{ERSTVY}
L. Erd\H{o}s, J. Ramirez,
  B. Schlein, T. Tao, V. Vu, and  H.-T. Yau, Bulk universality for Wigner hermitian matrices with subexponential decay, {\it arxiv:0906.4400}, {\it To appear in Math. Research Letters}.

\bibitem{ESY}
L. Erd\H{o}s, B. Schlein and  H.-T. Yau, Universality of Random Matrices and Local Relaxation Flow, {\it arXiv:0907.5605}

\bibitem{EYY2}
L. Erd\H{o}s, H.-T.Yau, and J. Yin, Rigidity of Eigenvalues of Generalized Wigner Matrices. {\it arXiv:1007.4652}

\bibitem{Gus}  J.  Gustavsson,  Gaussian fluctuations of eigenvalues in the GUE,
 {\it Ann. Inst. H. Poincar\'e Probab. Statist.} 41 (2005), no. 2, 151--178.

\bibitem{JMM} M. Jimbo, T.  Miwa, Tetsuji, Y.  Mori and M.  Sato,  Density matrix of an impenetrable Bose gas and the fifth Painlev\'e transcendent,
{\it  Phys. D  1.}  (1980), no. 1, 80--158.
 
\bibitem{Joh1} K. Johansson, Universality of the local spacing
distribution in certain ensembles of Hermitian Wigner matrices,
{\it Comm. Math. Phys.}  215 (2001), no. 3, 683--705.

\bibitem{knowles}
A. Knowles, J. Yin, Eigenvector Distribution of Wigner Matrices, {\it arXiv:1102.0057}

\bibitem{macchi}
O. Macchi, The coincidence approach to stochastic point processes, \emph{Adv. Appl. Probab.}, \textbf{7} (1975), 83-–122.

\bibitem{Meh} M.L. Mehta, Random Matrices and the Statistical Theory of Energy Levels, Academic Press, New York, NY, 1967.

\bibitem{hkpv}
J. Hough, M. Krishnapur, Y. Peres, B. Vir\'ag, Determinantal processes and independence, \emph{Probab. Surv.} \textbf{3} (2006), 206–-229.

\bibitem{lyons}
R. Lyons, Determinantal probability measures, \emph{Publ. Math. Inst. Hautes \'Etudes Sci.} \textbf{98}, 167-–212.

\bibitem{rs}
M. Reed, B. Simon, Methods of modern mathematical physics. I. Functional analysis. Second edition. Academic Press, Inc. (Harcourt Brace Jovanovich, Publishers), New York, 1980.

\bibitem{sos}
A. Soshnikov, Determinantal random point fields, \emph{Russian Math. Surveys} \textbf{55} (2000), 923–975.

\bibitem{sos-gauss}
A. Soshnikov, Gaussian limit for determinantal random point fields, \emph{Ann. Probab.} 30 (2002), no. 1, 171–187. 

\bibitem{TVlocal1}
T. Tao and V. Vu, Random matrices: Universality of the local eigenvalue statistics, {\it Acta Mathematica}
 206 (2011), 127--204.

\bibitem{TVlocal3} 
T. Tao, V. Vu, Random covariance matrices: university of local statistics of eigenvalues, {\it to appear in Annals of Probability}. 

\bibitem{TVeigenvector} T. Tao and V. Vu, Random matrices: Universal properties of eigenvectors,
 {\it to appear in Random Matrices: Theory and Applications}. 

 \bibitem{TVcont}  T. Tao and V. Vu, Random matrices: Sharp concentration  of eigenvalues, {\it preprint} http://arxiv.org/pdf/1201.4789.pdf.

\bibitem{TW-first} C. Tracy and H. Widom, On orthogonal and symplectic matrix ensembles, {\it Commun. Math. Phys.} 177 (1996) 727--754.

\bibitem{TW} C. A. Tracy and H. Widom, Introduction to random matrices, in Geometric and Quantum
Aspects of Integrable Systems, G. F. Helminck, ed., Lecture Notes in Physics, Vol. 424,
Springer-Verlag (Berlin), 1993, pgs. 103–-130.

\end{thebibliography}
\end{document}